\newtheorem{remark}[theorem]{Remark}
\newcommand{\R}{{\mathbb R}}
\newcommand{\C}{{\mathbb C}}
\begin{document}
\title{Eigenvector sensitivity under general and structured perturbations 
of tridiagonal Toeplitz-type matrices}

\author{Silvia Noschese\thanks{Dipartimento di Matematica ``Guido Castelnuovo'', SAPIENZA 
Universit\`a di Roma, P.le A. Moro, 2, I-00185 Roma, Italy. E-mail:
{\tt noschese@mat.uniroma1.it}. Research supported by a grant from
SAPIENZA Universit\`a di Roma. The author is a member of the INdAM Research Group GNCS.}
\and
Lothar Reichel\thanks{Department of Mathematical Sciences, Kent State University, Kent, OH
44242, USA. E-mail: {\tt reichel@math.kent.edu}. Research supported in part by NSF grants
DMS-1729509 and DMS-1720259.}
}

\maketitle

\begin{abstract}
The sensitivity of eigenvalues of structured matrices under general or structured 
perturbations of the matrix entries has been thoroughly studied in the literature. Error 
bounds are available and the pseudospectrum can be computed to gain insight. Few
investigations have focused on analyzing the sensitivity of eigenvectors under general
or structured perturbations. The present paper discusses this sensitivity for 
tridiagonal Toeplitz and Toeplitz-type matrices. 
\end{abstract}

\maketitle

\section{Introduction}\label{sec1}
The sensitivity of the eigenvalues of a structured matrix to general or structured 
perturbations of the matrix entries has received considerable attention in the literature.
Both bounds and graphical tools such as the pseudospectrum or structured pseudospectrum 
have been developed; see, e.g., \cite{BG05,BGN,NPR,NR19,TE,Wi}. While the pseudospectrum measures 
the sensitivity of the eigenvalues, it depends on the sensitivity of the eigenvectors of 
the matrix to perturbations of the matrix entries. However, we are not aware of 
investigations that focus on the sensitivity of the eigenvectors to general or structured
perturbations of a structured matrix. It is the purpose of the present paper to carry out 
such an investigation for tridiagonal Toeplitz matrices and tridiagonal Toeplitz-type 
matrices that are obtained by modifying the first and last diagonal entries of a 
tridiagonal Toeplitz matrix. These kinds of matrices arise in numerous applications, 
including the solution of ordinary and partial differential equations \cite{DL98,FGHLW74,Sm,YC08}, 
time series analysis \cite{LP10}, and as regularization matrices in Tikhonov regularization for 
the solution of discrete ill-posed problems \cite{H98,RY09}. It is therefore important to 
understand properties of these matrices relevant for computation. Our analysis is 
facilitated by the fact that the eigenvalues and eigenvectors of the matrices considered 
are known in closed form. 

Introduce the tridiagonal Toeplitz matrix 
\begin{equation}\label{TRap}
T=\left[ 
\begin{array}{ccccccc}
\delta & \tau &  &  &  &  & \Large{O} \\ 
\sigma & \delta & \tau &  &  &  &  \\ 
& \sigma & \cdot & \cdot &  &  &  \\ 
&  & \cdot & \cdot & \cdot &  &  \\ 
&  &  & \cdot & \cdot & \cdot &  \\ 
&  &  &  & \cdot & \cdot & \tau \\ 
\Large{O} &  &  &  &  & \sigma & \delta
\end{array}
\right] \,\in {\mathbb C}^{n\times n}.
\end{equation}
We will denote this matrix by $T=(n;\sigma ,\delta ,\tau)$. It is well known that its 
eigenvalues are given by
\begin{equation}\label{lamhT}
\lambda_{h}=\delta +2\sqrt{\sigma\tau }\,\cos \frac{h\pi}{n+1}, \quad h=1:n;
\end{equation}
see, e.g., \cite{Sm}. Assume that $\sigma \tau \neq 0$. Then the matrix (\ref{TRap}) has
$n$ simple eigenvalues, which lie on a line segment that is symmetric with respect 
to $\delta$. The components of the right eigenvector 
$x_{h}=[x_{h,1},x_{h,2},\ldots,x_{h,n}]^T\in{\mathbb C}^n$, for $h=1:n$, associated with 
the eigenvalue $\lambda_{h}$ are given by 
\begin{equation}\label{xhk}
x_{h,k}=\left(\sqrt{\frac{\sigma} {\tau }}\right)^{k}\sin \frac{hk\pi}{n+1},\quad k=1:n,
\end{equation}
and the corresponding left eigenvector 
$y_{h}=[y_{h,1},y_{h,2},\ldots,y_{h,n}]^T\in{\mathbb C}^n$ has the components 
\begin{equation}\label{yhk}
y_{h,k}=\left(\sqrt{\frac{\bar{\tau}}{\bar{\sigma}}}\right)^{k}\sin \frac{hk\pi}{n+1},
\quad k=1:n,
\end{equation}
where the bar denotes complex conjugation. Throughout this paper the superscript 
$(\cdot)^T$ stands for transposition and the superscript $(\cdot)^H$ for transposition and
complex conjugation.

If $\sigma=0$ and $\tau\neq 0$ (or $\sigma\neq 0$ and $\tau=0$), then the matrix 
(\ref{TRap}) has the unique eigenvalue $\delta$ of geometric multiplicity one. The right 
and left eigenvectors are the first and last columns (or the last and first columns) of 
the identity matrix, respectively.

We also will consider tridiagonal Toeplitz-type matrices of the form 
\begin{equation}\label{Talbeta}
T_{\alpha,\beta}=\left[ 
\begin{array}{ccccccc}
\delta-\alpha & \tau &  &  &  &  & \Large{O} \\ 
\sigma & \delta & \tau &  &  &  &  \\ 
& \sigma & \cdot & \cdot &  &  &  \\ 
&  & \cdot & \cdot & \cdot &  &  \\ 
&  &  & \cdot & \cdot & \cdot &  \\ 
&  &  &  & \cdot & \cdot & \tau \\ 
\Large{O} &  &  &  &  & \sigma & \delta-\beta
\end{array}
\right] \,\in {\mathbb C}^{n\times n}
\end{equation}
for certain parameters $\alpha,\beta\in{\mathbb C}$.
These matrices arise in the solution of ordinary or partial differential equations on an
interval. Thus, $T_{\alpha,\beta}$ is a tridiagonal Toeplitz matrix when $\alpha=\beta=0$.

Formulas for eigenvalues and eigenvectors of the matrices (\ref{Talbeta}) are explicitly
known for several choices of the parameters $\alpha$ and $\beta$; they are derived in 
\cite{Y}. Table \ref{table1} reports expressions for the eigenvalues for several choices 
of $\alpha$ and $\beta$. When $\sigma\tau\neq 0$, the components of the right eigenvector 
$x_h=[x_{h,1},x_{h,2},\ldots,x_{h,n}]^T\in{\mathbb C}^n$ associated with the eigenvalue 
$\lambda_{h}$ are given by
\[
\begin{array}{rclll}
x_{h,k}&=&\left(\sqrt{\frac{\sigma}{\tau }}\right)^{k}\sin\frac{2hk\pi}{2n+1},&\quad 
\alpha=0,&\quad\beta=\sqrt{\sigma\tau};\\
x_{h,k}&=&\left(\sqrt{\frac{\sigma}{\tau }}\right)^{k}\sin\frac{h(2k-1)\pi}{2n+1},&\quad
\alpha=\sqrt{\sigma\tau },&\quad\beta=0;\\
x_{h,k}&=&\left(\sqrt{\frac{\sigma}{\tau }}\right)^{k}\sin\frac{(2h-1)k\pi}{2n+1},&\quad
\alpha=0,&\quad\beta=-\sqrt{\sigma\tau};\\
x_{h,k}&=&\left(\sqrt{\frac{\sigma}{\tau}}\right)^{k}\cos\frac{(2h-1)(2k-1)\pi}{2(2n+1)},&
\quad \alpha=-\sqrt{\sigma\tau },&\quad\beta=0;\\
x_{h,k}&=&\left(\sqrt{\frac{\sigma}{\tau}}\right)^{k}\sin\frac{(2h-1)(2k-1)\pi}{4n},&
\quad \alpha=\sqrt{\sigma\tau },&\quad\beta=-\sqrt{\sigma\tau};\\
x_{h,k}&=&\left(\sqrt{\frac{\sigma}{\tau}}\right)^{k}\cos\frac{(2h-1)(2k-1)\pi}{4n},&\quad
\alpha=-\sqrt{\sigma\tau },&\quad\beta=\sqrt{\sigma\tau};\\
x_{h,k}&=&\left(\sqrt{\frac{\sigma}{\tau}}\right)^{k}\sin\frac{h(2k-1)\pi}{2n},&\quad 
\alpha=\sqrt{\sigma\tau},&\quad\beta=\sqrt{\sigma\tau};\\
x_{h,k}&=&\left(\sqrt{\frac{\sigma}{\tau}}\right)^{k}\cos\frac{(h-1)(2k-1)\pi}{2n},&\quad
\alpha=-\sqrt{\sigma\tau },&\quad\beta=-\sqrt{\sigma\tau},\\
\end{array}
\]
for $k=1:n$.

\begin{table}
\begin{center}
    \begin{tabular}{c|c|c}
    $\alpha$ & $\beta$& $\lambda_h$  \\ \hline 
    $0$&$\sqrt{\sigma\tau }$&$\delta +2\sqrt{\sigma\tau }\,\cos \frac{2h\pi }{2n+1}$\\
    $\sqrt{\sigma\tau }$&$0$&$\delta +2\sqrt{\sigma\tau }\,\cos \frac{2h\pi }{2n+1}$\\
    $0$&$-\sqrt{\sigma\tau }$&$\delta +2\sqrt{\sigma\tau }\,\cos \frac{(2h-1)\pi }{2n+1}$\\
    $-\sqrt{\sigma\tau }$&$0$&$\delta +2\sqrt{\sigma\tau }\,\cos \frac{(2h-1)\pi }{2n+1}$\\
    $\sqrt{\sigma\tau }$&$-\sqrt{\sigma\tau }$&$\delta +2\sqrt{\sigma\tau }\,\cos \frac{(2h-1)\pi }{2n}$\\
    $-\sqrt{\sigma\tau }$&$\sqrt{\sigma\tau }$&$\delta +2\sqrt{\sigma\tau }\,\cos \frac{(2h-1)\pi }{2n}$\\
    $\sqrt{\sigma\tau }$&$\sqrt{\sigma\tau }$&$\delta +2\sqrt{\sigma\tau }\,\cos \frac{h\pi }{n}$\\
    $-\sqrt{\sigma\tau }$&$-\sqrt{\sigma\tau }$&$\delta +2\sqrt{\sigma\tau }\,\cos \frac{(h-1)\pi }{n}$
        \end{tabular}
\end{center}
\caption{Formulas for the eigenvalues $\lambda_h$ of the matrix (\ref{Talbeta}) for
$h=1:n$ and several choices of $\alpha$ and $\beta$.}
\label{table1}
\end{table}

It is straightforward to show that the component $y_{h,k}$ of the left eigenvector
$y_h=[y_{h,1},\ldots,y_{h,n}]^T\in{\mathbb C}^n$ of \eqref{Talbeta} is obtained from the
component $x_{h,k}$ of the corresponding right eigenvector by replacing the factor
$(\sigma/\tau)^{k/2}$ by $(\bar{\tau}/\bar{\sigma})^{k/2}$.

This paper is organized as follows. Section \ref{sec2} discusses the sensitivity of the
eigenvalues of the matrices (\ref{TRap}) and (\ref{Talbeta}) to general (unstructured) 
perturbations. Eigenvalue condition numbers for the matrices (\ref{TRap}) and 
(\ref{Talbeta}) are given. Section \ref{sec3} is concerned with the sensitivity of the
eigenvectors of the matrices (\ref{TRap}) and (\ref{Talbeta}) to general perturbations. 
Eigenvector condition numbers are presented. Section \ref{sec4} discusses eigenvalue and 
eigenvector sensitivity to structured perturbations. Condition numbers are defined. 
Section \ref{sec5} describes two novel applications of tridiagonal Toeplitz matrices. The
first part of the section shows how eigenvalues of a symmetric tridiagonal matrix can 
be estimated by using the explicitly known eigenvalues of the closest symmetric 
tridiagonal Toeplitz matrix. In the latter part of the section, we discuss how the 
eigenvectors of a severely nonsymmetric nearly tridiagonal Toeplitz matrix can be 
computed accurately by using the explicitly known spectral factorization of the closest 
tridiagonal Toeplitz matrix. Concluding remarks can be found in Section \ref{sec6}.

\section{Sensitivity of the spectrum}\label{sec2}
This section discusses the sensitivity of the eigenvalues of the matrices (\ref{TRap}) and
(\ref{Talbeta}) to general (unstructured) perturbations of the matrix entries.

\subsection{Eigenvalue distances}\hfill\break

\begin{proposition}\label{prop1}
The eigenvalues (\ref{lamhT}) of the matrix $T$ defined by (\ref{TRap}) satisfy
\begin{equation}\label{evaldist}
\min_{\lambda_j \neq\lambda_h }|\lambda_h -\lambda_j |= 
\begin{cases}
        4\sqrt{|\sigma\tau|}\sin \frac{\pi }{2(n+1)}\sin \frac{(2h-1)\pi }{2(n+1)}, & \text{for }   1< h\leq \frac n 2 \text{  or } h=n,\\
        \\
        4\sqrt{|\sigma\tau|}\sin \frac{\pi }{2(n+1)}\sin \frac{(2h+1)\pi }{2(n+1)}, & \text{for }  h=1 \text{  or } \frac n 2< h<n.\\
\end{cases}
\end{equation}
In particular, the distance of the eigenvalue $\lambda_{h}$ to the other eigenvalues of 
$T$ only depends on $h$, $n$, and the product $|\sigma\tau|$. Moreover, the minimal 
distance between any two eigenvalues of $T$ is 
\[
4\sqrt{|\sigma\tau|}\sin \frac{\pi }{2(n+1)}\sin \frac{3\pi }{2(n+1)}.
\]
This distance is achieved by $|\lambda_1-\lambda_2|$ and $|\lambda_{n-1}-\lambda_{n}|$.
\end{proposition}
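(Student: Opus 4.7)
\emph{Strategy.} My plan is to start from the sum-to-product identity
$\cos A - \cos B = -2\sin\frac{A+B}{2}\sin\frac{A-B}{2}$
applied to \eqref{lamhT}, which rewrites
\begin{equation*}
|\lambda_h-\lambda_j| = 4\sqrt{|\sigma\tau|}\left|\sin\frac{(h+j)\pi}{2(n+1)}\sin\frac{(h-j)\pi}{2(n+1)}\right|
\end{equation*}
for all $h,j\in\{1,\dots,n\}$. Because the real numbers $\cos\frac{j\pi}{n+1}$ are strictly decreasing in $j$, the cosine nearest to $\cos\frac{h\pi}{n+1}$ must be an index-neighbor, so the minimum over $j\neq h$ is attained at $j=h\pm 1$ whenever these indices are admissible. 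The problem thus reduces to comparing the two consecutive gaps
\begin{equation*}
g_{h-1} = 4\sqrt{|\sigma\tau|}\sin\tfrac{\pi}{2(n+1)}\sin\tfrac{(2h-1)\pi}{2(n+1)}, \qquad
g_{h} = 4\sqrt{|\sigma\tau|}\sin\tfrac{\pi}{2(n+1)}\sin\tfrac{(2h+1)\pi}{2(n+1)},
\end{equation*}
together with a separate treatment of the boundary indices $h=1$ and $h=n$.

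\emph{Case split.} Both arguments $(2h\pm1)\pi/(2(n+1))$ lie in $(0,\pi)$, where $\sin$ is increasing on $[0,\pi/2]$ and decreasing on $[\pi/2,\pi]$. Hence $g_{h-1}\le g_h$ iff $(2h-1)\pi/(2(n+1))$ is at least as far from $\pi/2$ as $(2h+1)\pi/(2(n+1))$, which an elementary manipulation of $|2h-n-2|$ versus $|2h-n|$ reduces to $h\le(n+1)/2$; the reverse inequality holds for $h\ge(n+1)/2$, with equality in the middle case $h=(n+1)/2$ (possible only for odd $n$) via $\sin(\pi-x)=\sin x$. At $h=1$ only $g_1$ is available, producing the ``$(2h+1)$'' value at $h=1$; at $h=n$ only $g_{n-1}$ is available, and since $\sin\frac{(2n-1)\pi}{2(n+1)}=\sin\frac{3\pi}{2(n+1)}$ this value matches the ``$(2h-1)$'' template at $h=n$. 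Assembling the three regimes yields exactly \eqref{evaldist}.

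\emph{Global minimum.} On $2\le h\le n/2$ the quantity $\sin\frac{(2h-1)\pi}{2(n+1)}$ is strictly increasing in $h$, so it is minimized at $h=2$ with value $\sin\frac{3\pi}{2(n+1)}$. By the reflection $h\mapsto n+1-h$, which interchanges the two branches of \eqref{evaldist} and preserves the $\sin$-values, the branch $n/2<h\le n-1$ is minimized at $h=n-1$ with the same value, and the endpoints $h=1,n$ also deliver $\sin\frac{3\pi}{2(n+1)}$. Multiplying by $4\sqrt{|\sigma\tau|}\sin\frac{\pi}{2(n+1)}$ produces the stated minimum, attained exactly by $|\lambda_1-\lambda_2|=g_1$ and $|\lambda_{n-1}-\lambda_n|=g_{n-1}$. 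The principal obstacle is organizational rather than analytical: one must ensure each $h\in\{1,\dots,n\}$ is covered by exactly one branch of the case split and that the two endpoint indices slot correctly into the appropriate template.
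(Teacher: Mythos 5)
Your proof is correct and follows essentially the same route as the paper: the sum-to-product identity, reduction of the minimum to the two index-neighbors $j=h\pm 1$ via monotonicity of $\cos\frac{j\pi}{n+1}$, and then a comparison of $\sin\frac{(2h\pm 1)\pi}{2(n+1)}$. You simply spell out the case analysis and the global-minimum argument that the paper compresses into ``the remaining statements follow from this formula.''
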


\begin{proof}
Let $1\leq j,h\leq n$. The trigonometric identity
\[
\cos a-\cos b =-2\sin \frac{a+b}{2}\sin \frac{a-b}{2}
\]
yields
\begin{eqnarray*}
\min_{\lambda_j \neq\lambda_h }|\lambda_h -\lambda_j |&=&
\min \left\{|\lambda_h -\lambda_{h+1} |,|\lambda_h -\lambda_{h-1} | \right\}\\
&=&2\sqrt{|\sigma\tau|}\min \left\{\left|\cos \frac{h\pi }{n+1}-\cos \frac{(h+1)\pi }{n+1}\right|,
\left|\cos \frac{h\pi }{n+1}-\cos \frac{(h-1)\pi}{n+1}\right|\right\} \\
&=&4\sqrt{|\sigma\tau|}\sin\frac{\pi }{2(n+1)}\min\left\{\left|\sin\frac{(2h+1)\pi}{2(n+1)}\right|,
\left|\sin\frac{(2h-1)\pi}{2(n+1)}\right|\right\}. 
\end{eqnarray*}
This shows (\ref{evaldist}). The remaining statements follow from this formula.
\end{proof}

\begin{remark}
Results on the spacing of the eigenvalues of Hermitian  Toeplitz matrices with simple-loop 
symbols (e.g., of the eigenvalues of Hermitian tridiagonal Toeplitz matrices) are reported
in \cite{BBGM15,BBGM17}. Such results can be extended to non-Hermitian tridiagonal 
Toeplitz matrices by diagonal similarity transformation.  To this end, we note that the
matrix $T=(n;\sigma,\delta,\tau)$ is, via the diagonal matrix $D=\diag[1,v,\ldots,v^{n-1}]$,
similar to $T'=(n;v\sigma,\delta,v^{-1}\tau)$. One can choose $v$ so that 
$|v \sigma| = |v^{-1} \tau|$. The matrix $T'$ then is normal; see \cite[Theorem 3.1]{NPR}. 
In particular, real tridiagonal Toeplitz matrices $T$ can be transformed to symmetric 
matrices $T'$ by letting $v$ be such that $v \sigma = v^{-1} \tau$.
\end{remark}

An analogue of Proposition \ref{prop1} can be shown for the eigenvalues of the matrix
(\ref{Talbeta}) for the choices of $\alpha$ and $\beta$ considered in Table \ref{table1}.
The results follow from the expressions for the eigenvalues in this table and are 
formulated in the following proposition.

\begin{proposition}\label{prop1bis}
The eigenvalues $\lambda_h$ of the matrix $T_{\alpha,\beta}$ satisfy
\begin{enumerate}[label=(\roman*)]
\item for $\alpha=0$ and $ \beta=\sqrt{\sigma\tau }$ or vice versa
\begin{equation*}
\min_{\lambda_j \neq\lambda_h }|\lambda_h -\lambda_j |= 
\begin{cases}
  4\sqrt{|\sigma\tau|}\sin \frac{\pi }{2n+1}\sin \frac{(2h-1)\pi }{2n+1}, & \text{for } 
  1< h\leq \frac n 2 \text{  or } h=n,\\
        \\
  4\sqrt{|\sigma\tau|}\sin \frac{\pi }{2n+1}\sin \frac{(2h+1)\pi }{2n+1}, & \text{for } 
  h=1 \text{  or } \frac n 2< h<n;\\
\end{cases}
\end{equation*}

\item for $\alpha=0$ and $ \beta=-\sqrt{\sigma\tau }$ or vice versa
\begin{equation*}
\min_{\lambda_j \neq\lambda_h }|\lambda_h -\lambda_j |= 
\begin{cases}
  4\sqrt{|\sigma\tau|}\sin \frac{\pi }{2n+1}\sin \frac{2(h-1)\pi }{2n+1}, & \text{for } 
  1< h\leq \lceil \frac n 2 \rceil \text{  or } h=n,\\
        \\
  4\sqrt{|\sigma\tau|}\sin \frac{\pi }{2n+1}\sin \frac{2h\pi }{2n+1}, & \text{for } 
  h=1 \text{  or } \lceil \frac n 2 \rceil< h<n;\\
\end{cases}
\end{equation*}

\item for $\alpha=\sqrt{\sigma\tau }$ and $ \beta=-\sqrt{\sigma\tau }$ or vice versa

\begin{equation*}
\min_{\lambda_j \neq\lambda_h }|\lambda_h -\lambda_j |= 
\begin{cases}
  4\sqrt{|\sigma\tau|}\sin \frac{\pi }{n}\sin \frac{(h-1)\pi }{n}, & \text{for } 
  1< h\leq \frac n 2 \text{  or } h=n,\\
        \\
  4\sqrt{|\sigma\tau|}\sin \frac{\pi }{n}\sin \frac{h\pi }{n}, & \text{for } 
  h=1 \text{  or } \frac n 2< h<n;\\
\end{cases}
\end{equation*}

\item for $\alpha=\sqrt{\sigma\tau }$ and $ \beta=\sqrt{\sigma\tau }$

\begin{equation*}
\min_{\lambda_j \neq\lambda_h }|\lambda_h -\lambda_j |= 
\begin{cases}
        4\sqrt{|\sigma\tau|}\sin \frac{\pi }{2n}\sin \frac{(2h-1)\pi }{2n}, & \text{for }   1< h\leq \frac n 2 \text{  or } h=n,\\
        \\
        4\sqrt{|\sigma\tau|}\sin \frac{\pi }{2n}\sin \frac{(2h+1)\pi }{2n}, & \text{for }  h=1 \text{  or } \frac n 2< h<n;\\
\end{cases}
\end{equation*}
\item for $\alpha=-\sqrt{\sigma\tau }$ and $ \beta=-\sqrt{\sigma\tau }$

\begin{equation*}
\min_{\lambda_j \neq\lambda_h }|\lambda_h -\lambda_j |= 
\begin{cases}
   4\sqrt{|\sigma\tau|}\sin \frac{\pi }{2n}\sin \frac{(2h-3)\pi }{2n}, & \text{for } 
   1< h\leq \lceil \frac n 2 \rceil \text{  or } h=n,\\
        \\
   4\sqrt{|\sigma\tau|}\sin \frac{\pi }{2n}\sin \frac{(2h-1)\pi }{2n}, & \text{for } 
   h=1 \text{  or } \lceil \frac n 2 \rceil< h<n.\\
\end{cases}
\end{equation*}
\end{enumerate}
\end{proposition}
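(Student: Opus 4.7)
The plan is to replay the argument of Proposition \ref{prop1}, applied case by case to each of the five parameter settings listed in Proposition \ref{prop1bis}, using the eigenvalue formulas collected in Table \ref{table1}. In every setting, the eigenvalues take the form $\lambda_h = \delta + 2\sqrt{\sigma\tau}\cos\theta_h$, where $\theta_h$ is an affine function of $h$ whose values lie in $(0,\pi)$ (or $[0,\pi)$ in case (v)). Since $\cos$ is strictly monotonic on that interval, the $\lambda_h$ are strictly ordered in $h$, and hence
\[
\min_{\lambda_j\neq\lambda_h}|\lambda_h - \lambda_j| = \min\bigl\{|\lambda_h - \lambda_{h-1}|,\,|\lambda_h - \lambda_{h+1}|\bigr\},
\]
with the obvious modifications at the endpoints $h=1$ and $h=n$, where only one neighbour is present.

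The central computation is to apply the sum-to-product identity $\cos a - \cos b = -2\sin\frac{a+b}{2}\sin\frac{a-b}{2}$ to each of the two differences. Because $\theta_h$ is affine in $h$, the factor $\sin\frac{\theta_h-\theta_{h\pm 1}}{2}$ is independent of $h$ and produces the common multiplier $\sin\frac{\pi}{2n+1}$ in cases (i)--(ii) and $\sin\frac{\pi}{2n}$ in cases (iii)--(v). The remaining factor $\sin\frac{\theta_h+\theta_{h\pm 1}}{2}$ carries the $h$-dependence and yields the second sine appearing in the stated formulas. To select the minimum between the two candidates, I would use that both arguments lie in $(0,\pi)$ and that $\sin$ is unimodal with maximum at $\pi/2$: of two sines whose arguments are symmetric around a common midpoint, the smaller one is that whose argument is farther from $\pi/2$. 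This produces a dichotomy pivoting around $h\approx n/2$; the precise threshold is $n/2$ in cases (i), (iii), (iv) and $\lceil n/2\rceil$ in cases (ii) and (v), the difference being determined by the form of the midpoint of the two arguments. The boundary values $h=1$ and $h=n$ are then matched to the appropriate branch by direct inspection, since only one neighbour contributes.

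The main obstacle is pure bookkeeping: one must carry out the identity and the comparison carefully five times, tracking the precise argument shift that appears in each formula and verifying the correct case-split threshold against the monotonicity of $\sin$ on $(0,\pi)$. No new idea is required beyond the one already used in the proof of Proposition \ref{prop1}; the difficulty lies in processing all five parameter settings without sign or indexing errors.
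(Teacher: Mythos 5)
Your approach is exactly the one the paper intends: the paper gives no separate proof of Proposition \ref{prop1bis}, stating only that the results ``follow from the expressions for the eigenvalues in this table'' by the argument of Proposition \ref{prop1}, and your outline (neighboring eigenvalues via monotonicity of $\cos$ on $(0,\pi)$, the sum-to-product identity, and the unimodality comparison of the two candidate sines) is that argument carried out case by case; the method is sound.

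One substantive point deserves attention. For case (iii) the angles are $\theta_h=\frac{(2h-1)\pi}{2n}$, spaced $\frac{\pi}{n}$ apart, so the $h$-independent factor is $\sin\frac{\theta_h-\theta_{h\pm1}}{2}=\sin\frac{\pi}{2n}$ --- which is what you write --- whereas the printed statement (and the corresponding rows of Table \ref{table2}) has $\sin\frac{\pi}{n}$. A check at $n=2$, $\sigma=\tau=1$, $\delta=0$ gives eigenvalues $\pm\sqrt{2}$ with gap $2\sqrt{2}=4\sin\frac{\pi}{4}\sin\frac{\pi}{2}$, not $4\sin\frac{\pi}{2}\sin\frac{\pi}{2}=4$, confirming that your value is the correct one and that item (iii) of the proposition contains a typo. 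You should, however, flag this discrepancy explicitly rather than let your derivation silently disagree with the formula you set out to prove.
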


Table \ref{table2} shows the minimal distance between any two eigenvalue of 
$T_{\alpha,\beta}$ for the choices of $\alpha$ and $\beta$ of Table \ref{table1}.

\begin{table}
\begin{center}
\begin{tabular}{c|c|c}
    $\alpha$ & $\beta$& Minimal distance  \\
    \hline 
    $0$&$\sqrt{\sigma\tau }$&$4\sqrt{|\sigma\tau|}\sin \frac{\pi }{2n+1}\sin \frac{2\pi }{2n+1}$\\
    $\sqrt{\sigma\tau }$&$0$&$4\sqrt{|\sigma\tau|}\sin \frac{\pi }{2n+1}\sin \frac{2\pi }{2n+1}$\\
    $0$&$-\sqrt{\sigma\tau }$&$4\sqrt{|\sigma\tau|}\sin \frac{\pi }{2n+1}\sin \frac{2\pi }{2n+1}$\\
    $-\sqrt{\sigma\tau }$&$0$&$4\sqrt{|\sigma\tau|}\sin \frac{\pi }{2n+1}\sin \frac{2\pi }{2n+1}$\\
    $\sqrt{\sigma\tau }$&$-\sqrt{\sigma\tau }$&$4\sqrt{|\sigma\tau|}\sin ^2\frac{\pi }{n}$\\
    $-\sqrt{\sigma\tau }$&$\sqrt{\sigma\tau }$&$4\sqrt{|\sigma\tau|}\sin^2 \frac{\pi }{n}$\\
    $\sqrt{\sigma\tau }$&$\sqrt{\sigma\tau }$&$4\sqrt{|\sigma\tau|}\sin^2 \frac{\pi }{2n}$\\
    $-\sqrt{\sigma\tau }$&$-\sqrt{\sigma\tau }$&$4\sqrt{|\sigma\tau|}\sin^2 \frac{\pi }{2n}$
\end{tabular}
\end{center}
\caption{Minimal distance between eigenvalues of the matrix (\ref{Talbeta}) for several 
choices of $\alpha$ and $\beta$.}\label{table2}
\end{table}

\subsection{Eigenvalue condition numbers}
We first consider the eigenvalues of the Toeplitz matrix (\ref{TRap}). Condition numbers
for these eigenvalues also have been discussed in \cite{NPR}. When $\sigma\tau\neq 0$, 
eigenvalue condition numbers can be obtained from (\ref{xhk}) and (\ref{yhk}). Standard 
computations and the trigonometric identity 
\begin{equation}\label{trig1}
\sum_{k=1}^{n}\sin^{2}\frac{hk\pi }{n+1}=
\frac{n+1}{2} \,,\quad h=1:n, 
\end{equation}
show that, for $h=1:n$,
\begin{eqnarray*}
\|x_{h}\|_{2}^{2}&=&\sum_{k=1}^{n}\left|\frac{\sigma}{\tau}\right|^{k}
\sin^{2} \frac{hk\pi}{n+1}, \\
\|y_{h}\|_{2}^{2}&=&\sum_{k=1}^{n}\left|\frac{\tau}{\sigma}\right|^{k}
\sin ^{2}\frac{hk\pi}{n+1},\\
|y_{h}^{H}\,x_{h}|&=&\sum_{k=1}^{n}\sin^{2}\frac{hk\pi}{n+1}
=\frac{n+1}{2}.
\end{eqnarray*}
Consequently, the condition numbers for the eigenvalues $\lambda_h$, $h=1:n$,
of the matrix (\ref{TRap}) are given by
\begin{eqnarray}
\kappa(\lambda_{h} )&=&
\frac{\|x_{h}\|_{2}\|y_{h}\|_{2}}{\left|y_{h}^{H}\,x_{h}\right|}\\
\label{klamhT}
&=&
\nonumber
\frac{2}{n+1}\sqrt{\sum_{k=1}^{n}
\left|\frac{\sigma}{\tau}\right|^{k}\sin^{2} \frac{hk\pi}{n+1}
\cdot \sum_{k=1}^{n}\left|\frac{\tau}{\sigma}\right|^{k}\sin^{2}
\frac{hk\pi}{n+1}}.
\end{eqnarray}
Note that the eigenvalue condition numbers $\kappa(\lambda_{h})$ only depend on $h$, $n$,
and the ratio $\left|\frac{\sigma}{\tau}\right|$. When $|\sigma|=|\tau|$, we have
\[
\left\| x_{h}\right\| _{2}^{2}=\left\| y_{h}\right\|_{2}^{2}=
\sum_{k=1}^{n}\sin ^{2} \frac{hk\,\pi }{n+1}=
\frac{n+1}{2}\,,\quad h=1:n,
\]
and it follows that 
\[
\kappa(\lambda_{h} )=\frac{\left\| x_{h}\right\|_{2}
\left\|y_{h}\right\|_{2}}{\left|y_{h}^{H}\,x_{h}\right|}=1.
\]
Thus, the eigenvalues are perfectly conditioned. This is in agreement with the observation 
that the matrix $T$ is normal when $|\sigma|=|\tau|$; see \cite[Theorem 3.1]{NPR}.

We turn to the condition numbers of the eigenvalues of the Toeplitz-like matrix 
$T_{\alpha,\beta}$ defined by (\ref{Talbeta}) for parameters $\alpha$ and $\beta$ 
considered in Table \ref{table1}. The condition numbers, which are reported in 
Tables \ref{table3} and \ref{table4}, can be derived by using the trigonometric 
identities
\[
\begin{array}{rclcrcl}
\sum_{k=1}^{n}\sin^{2} \frac{2hk\,\pi }{2n+1}&=&\frac{2n+1}{4}, &\quad& 
\sum_{k=1}^{n}\sin^{2} \frac{h(2k-1)\,\pi}{2n+1}&=&\frac{2n+1}{4},\\
\sum_{k=1}^{n}\sin^{2} \frac{(2h-1)k\,\pi}{2n+1}&=&\frac{2n+1}{4}, &\quad&
\sum_{k=1}^{n}\cos^{2} \frac{(2h-1)(2k-1)\,\pi}{2(2n+1)}&=& \frac{2n+1}{4},\\
\sum_{k=1}^{n}\sin^{2} \frac{(2h-1)(2k-1)\,\pi}{4n}&=&\frac{n}{2},&\quad&
\sum_{k=1}^{n}\cos^{2} \frac{(2h-1)(2k-1)\,\pi}{4n}&=& \frac{n}{2},
\end{array}
\]
for $h=1:n$, and 
\begin{equation*}
\sum_{k=1}^{n}\sin ^{2} \frac{h(2k-1)\,\pi }{2n}= \frac{n}{2}\,, \textrm{ if  } h\neq n;
\quad \sum_{k=1}^{n}\cos ^{2} \frac{(h-1)(2k-1)\,\pi }{2n}= \frac{n}{2},  \textrm{ if  } 
h\neq 1.
\end{equation*}
Notice that $\kappa(\lambda_h)=1$ when $\alpha=\beta=\sqrt{\sigma\tau}$ and $h=n$, and 
when $\alpha=\beta=-\sqrt{\sigma\tau}$ and $h=1$. 

\begin{proposition}\label{propnew}
Let $|\sigma|=|\tau|>0$, and let $\alpha$ and $\beta$ be defined as in Table \ref{table1}.
Then the tridiagonal Toeplitz-like matrix given by \eqref{Talbeta} is normal. 
Consequently, all eigenvalues have condition number one.
\end{proposition}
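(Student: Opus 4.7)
The plan is to show that $T_{\alpha,\beta}$ is unitarily similar to a matrix of the form $\delta I + sR$, where $s = \sqrt{\sigma\tau}$ and $R$ is real symmetric. Normality of such a matrix is immediate, it transfers back to $T_{\alpha,\beta}$ through the unitary similarity, and the condition-number claim then follows from the standard fact that a normal matrix admits an orthonormal eigenbasis.

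I would follow the diagonal-similarity idea from the remark after Proposition \ref{prop1}: set $v = \sqrt{\tau/\sigma}$ and $D = \diag(1,v,v^2,\ldots,v^{n-1})$. Because $|\sigma|=|\tau|$, we have $|v|=1$, so $D$ is unitary. A short computation shows that $\widetilde T := D\,T_{\alpha,\beta}\,D^{-1}$ leaves the diagonal entries of $T_{\alpha,\beta}$ unchanged and turns both the sub- and super-diagonals into the common value $s$. The crucial structural input is that every line of Table \ref{table1} has $\alpha = k_\alpha s$ and $\beta = k_\beta s$ with $k_\alpha,k_\beta \in \{-1,0,1\}$, which lets me write $\widetilde T = \delta I + sR$, where $R$ is the real symmetric tridiagonal matrix with $R_{11} = -k_\alpha$, $R_{nn} = -k_\beta$, and unit off-diagonals. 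Since $R$ is Hermitian, a one-line check yields
\[
\widetilde T\,\widetilde T^H = |\delta|^2 I + (\delta\bar s + \bar\delta s)R + |s|^2 R^2 = \widetilde T^H\,\widetilde T,
\]
so $\widetilde T$ is normal and, via the unitary $D$, so is $T_{\alpha,\beta}$. Each $\kappa(\lambda_h)$ is then automatically $1$.

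I do not expect any real obstacle: the only thing requiring verification is that for each of the eight cases in Table \ref{table1} the ratios $\alpha/s$ and $\beta/s$ really do lie in $\{-1,0,1\}$, which is immediate from $\alpha,\beta\in\{0,\pm\sqrt{\sigma\tau}\}$. This real-symmetry of $R$ is essential: it is what makes $sR$ normal even though $s$ itself may be complex, and it is exactly the feature that would be lost if $\alpha$ or $\beta$ were any other complex multiple of $s$, in which case the corner perturbation would fail to commute with $R$ after Hermitian conjugation and the argument would collapse.
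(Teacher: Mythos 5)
Your argument is correct, and it takes a genuinely different route from the paper. The paper proves normality by invoking the characterization of normal tridiagonal matrices in \cite[Theorem 1]{BF}: writing $\delta=(r+id)e^{i\phi}$ and $\tau=\sigma e^{2i\phi}$, it checks that the perturbed corner entries $\delta\pm\sqrt{\sigma\tau}=(\hat r+id)e^{i\phi}$ with $\hat r=r\pm|\sigma|$ still satisfy the collinearity condition of that theorem, so the verification is a one-line appeal to an external structure result. You instead give a self-contained reduction: since $|\sigma|=|\tau|>0$, the scaling $D=\mathrm{diag}(1,v,\ldots,v^{n-1})$ with $v=\sqrt{\tau/\sigma}$ is \emph{unitary}, it equalizes the off-diagonals to $s=\sqrt{\sigma\tau}$ while fixing the diagonal, and because every $\alpha,\beta$ in Table \ref{table1} lies in $\{0,\pm s\}$ the conjugated matrix is $\delta I+sR$ with $R$ real symmetric, whence normality follows from the displayed two-line commutator check and transfers back through $D$. (The only cosmetic caveat is the branch of the square root: $v\sigma$ may equal $-s$ rather than $s$, but then $R$ simply has $-1$ on its off-diagonals and remains real symmetric, so nothing changes.) Your approach buys independence from \cite{BF} and makes the mechanism transparent --- it exhibits the explicit unitary similarity to a complex shift-and-scale of a real symmetric matrix, which is exactly the structure underlying the closed-form eigenvectors listed in Section \ref{sec1} --- at the cost of a slightly longer verification; the paper's proof is shorter but leans on the cited characterization. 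Both correctly conclude $\kappa(\lambda_h)=1$ from normality.
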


\begin{proof}
To show normality, we may apply \cite[Theorem 1]{BF} and note, using the notation of this
reference, that we have $\delta\pm\sqrt{\sigma\tau}=(\hat{r}+id)e^{i\phi}$, with 
$\hat{r}=r\pm|\sigma|$, if $\delta= (r+id)e^{i\phi}$ and $\tau=\sigma e^{2i\phi}$ for 
some $r,d,\phi\in\mathbb{R}$, and if $|\sigma|=|\tau|\neq 0$. Here and below $i$ denotes 
the imaginary unit.
\end{proof}

\begin{table}
{\small
\begin{center}
    \begin{tabular}{c|c|c}
    $\alpha$ & $\beta$& $\kappa(\lambda_h)$ \\
    \hline 
    $0$&$\sqrt{\sigma\tau }$&$\frac{4}{2n+1}\sqrt{\sum_{k=1}^{n}
\left|\frac{\sigma}{\tau}\right|^{k}\sin^{2} \frac{2hk\pi}{2n+1}
\cdot \sum_{k=1}^{n}\left|\frac{\tau}{\sigma}\right|^{k}\sin^{2}
\frac{2hk\pi}{2n+1}}$\\
    $\sqrt{\sigma\tau }$&$0$&$\frac{4}{2n+1}\sqrt{\sum_{k=1}^{n}
\left|\frac{\sigma}{\tau}\right|^{k}\sin^{2} \frac{h(2k-1)\pi}{2n+1}
\cdot \sum_{k=1}^{n}\left|\frac{\tau}{\sigma}\right|^{k}\sin^{2}
\frac{h(2k-1)\pi}{2n+1}}$\\
    $0$&$-\sqrt{\sigma\tau }$&$\frac{4}{2n+1}\sqrt{\sum_{k=1}^{n}
\left|\frac{\sigma}{\tau}\right|^{k}\sin^{2} \frac{(2h-1)k\pi}{2n+1}
\cdot \sum_{k=1}^{n}\left|\frac{\tau}{\sigma}\right|^{k}\sin^{2}
\frac{(2h-1)k\pi}{2n+1}}$\\
    $-\sqrt{\sigma\tau }$&$0$&$\frac{4}{2n+1}\sqrt{\sum_{k=1}^{n}
\left|\frac{\sigma}{\tau}\right|^{k}\cos^{2} \frac{(2h-1)(2k-1)\pi}{2(2n+1)}
\cdot \sum_{k=1}^{n}\left|\frac{\tau}{\sigma}\right|^{k}\cos^{2}
\frac{(2h-1)(2k-1)\pi}{2(2n+1)}}$\\
    $\sqrt{\sigma\tau }$&$-\sqrt{\sigma\tau }$&$\frac{2}{n}\sqrt{\sum_{k=1}^{n}
\left|\frac{\sigma}{\tau}\right|^{k}\sin^{2} \frac{(2h-1)(2k-1)\pi}{4n}
\cdot \sum_{k=1}^{n}\left|\frac{\tau}{\sigma}\right|^{k}\sin^{2}
\frac{(2h-1)(2k-1)\pi}{4n}}$\\
    $-\sqrt{\sigma\tau }$&$\sqrt{\sigma\tau }$&$\frac{2}{n}\sqrt{\sum_{k=1}^{n}
\left|\frac{\sigma}{\tau}\right|^{k}\cos^{2} \frac{(2h-1)(2k-1)\pi}{4n}
\cdot \sum_{k=1}^{n}\left|\frac{\tau}{\sigma}\right|^{k}\cos^{2}
\frac{(2h-1)(2k-1)\pi}{4n}}$
        \end{tabular}
\end{center}}
\caption{Condition numbers of the eigenvalues $\lambda_h$, $h=1:n$, of the matrix
(\ref{Talbeta}) for several choices of $\alpha\neq\beta$.}
\label{table3}
\end{table}

\begin{table}
\begin{center}
    \begin{tabular}{c|c|c}
    $\alpha$ & $\beta$& $\kappa(\lambda_h)$  \\
    \hline 
    $\sqrt{\sigma\tau }$&$\sqrt{\sigma\tau }$&$\frac{2}{n}\sqrt{\sum_{k=1}^{n}
\left|\frac{\sigma}{\tau}\right|^{k}\sin^{2} \frac{h(2k-1)\pi}{2n}
\cdot \sum_{k=1}^{n}\left|\frac{\tau}{\sigma}\right|^{k}\sin^{2}
\frac{h(2k-1)\pi}{2n}}$\\
    $-\sqrt{\sigma\tau }$&$-\sqrt{\sigma\tau }$&$\frac{2}{n}\sqrt{\sum_{k=1}^{n}
\left|\frac{\sigma}{\tau}\right|^{k}\cos^{2} \frac{(h-1)(2k-1)\pi}{2n}
\cdot \sum_{k=1}^{n}\left|\frac{\tau}{\sigma}\right|^{k}\cos^{2}
\frac{(h-1)(2k-1)\pi}{2n}}$\\
        \end{tabular}
\end{center}
\caption{Condition numbers of the eigenvalues $\lambda_h$, $h=1:n-1$, of the matrix
(\ref{Talbeta}) for $\alpha=\beta=\sqrt{\sigma\tau}$, and of the eigenvalues 
$\lambda_h$, $h=2:n$, for $\alpha=\beta=-\sqrt{\sigma\tau }$.}\label{table4}
\end{table}

\section{Sensitivity of the eigenvectors}\label{sec3}
The beginning of this section reviews results by Stewart \cite{St01}. These results are 
subsequently applied to the matrices \eqref{TRap} and \eqref{Talbeta}.

\subsection{Eigenvector condition numbers}\hfill\break
\begin{definition}(\cite{St01})\label{condvec1}
Let $A\in\mathbb{C}^{n \times n}$ and let $x$ be an eigenvector of unit norm associated 
with the simple eigenvalue $\mu$. Let $U\in\mathbb{C}^{n \times (n-1)}$ be a matrix whose 
columns form an orthonormal basis for $\text{Range}(A-\mu I)$. The condition number of $x$
(i.e., the condition number of the one-dimensional invariant subspace spanned by $x$) is
defined by
\[
\kappa(x)=\|(\mu I - U^HAU)^{-1}\|_2^{-1}.
\]
\end{definition}

Let $A^{\varepsilon}=A+\varepsilon E$, where $\varepsilon\in\R$ is of small magnitude and
$E\in{\mathbb C}^{n\times n}$ is a matrix with $\|E\|_F=1$. Here and below $\|\cdot\|_F$ 
denotes the Frobenius norm. Let $x^{\varepsilon}$ be the unit eigenvector of 
$A^{\varepsilon}$ corresponding to $x$, i.e., there is a continuous mapping $t\to x^t$ for 
$0\leq t\leq\varepsilon$ such that $x^t=x$ for $t=0$ and $x^t=x^\varepsilon$ for 
$t=\varepsilon$. Then for the induced perturbation in the direction between $x$ and the 
pseudoeigenvector $x^{\varepsilon}$ one has
\begin{equation}
\sin \theta_{x,x^{\varepsilon}}\leq \kappa(x)\varepsilon,
\end{equation}
where $\sin \theta_{x,x^{\varepsilon}}:= \sqrt{1-\cos^2 \theta_{x,x^{\varepsilon}}}$ and 
$\cos \theta_{x,x^{\varepsilon}}:=|x^H x^{\varepsilon}|$; see Stewart 
\cite[pp. 48--50]{St01} for more details.

\subsection{Eigenvector condition numbers in the normal case}
Let the matrix $A\in{\mathbb C}^{n\times n}$ be normal and denote its spectrum by 
$\Lambda(A)$. Consider the expression $\|(\mu I-U^HAU)^{-1}\|_2$ of 
Definition \ref{condvec1}. It is straightforward to show that the upper bound
\[
\|(\mu I - U^HAU)^{-1}\|_2 \leq \min_{{\substack{\lambda\neq\mu\\ \lambda\in\Lambda(A)}}}
|\mu-\lambda|
\]
is attained because $A$ is unitarily diagonalizable. Thus, if the matrix 
$A\in{\mathbb C}^{n\times n}$ is normal, the condition number of a unit eigenvector $x$ only
depends on how well the associated eigenvalue $\mu$ is separated from the 
other eigenvalues of the matrix. This result leads to the following proposition, which
is shown in \cite{St01}.

\begin{proposition}\label{codvecdef}
Let $A\in\mathbb{C}^{n \times n}$ be a normal matrix and let $x$ be a unit eigenvector 
associated with the simple eigenvalue $\mu$.  The condition number of $x$ (i.e., the 
condition number of the one-dimensional invariant subspace spanned by $x$) is given by
\[
\kappa(x)=\left( \min_{\substack{\lambda\neq\mu\\ \lambda\in\Lambda(A)}} 
|\mu-\lambda|\right)^{-1}.
\]
\end{proposition}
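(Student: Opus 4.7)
My approach is to use the spectral theorem to place $U^HAU$ into diagonal form, at which point the spectral norm appearing in Definition \ref{condvec1} can be read off directly. Since $A$ is normal, I would begin with a unitary diagonalization $A = Q\Lambda Q^H$, where $Q = [q_1,\ldots,q_n]$ is unitary and $\Lambda = \mathrm{diag}(\lambda_1,\ldots,\lambda_n)$ lists the spectrum. Because $\mu$ is simple, we may write $\mu = \lambda_i$ for a unique index $i$, and the unit eigenvector $x$ coincides with $q_i$ up to a unimodular factor.

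Next, I would identify $\mathrm{Range}(A-\mu I)$. From $A-\mu I = Q(\Lambda-\mu I)Q^H$ and the fact that only the $i$-th diagonal entry of $\Lambda-\mu I$ vanishes, this range equals $\mathrm{span}\{q_j:j\neq i\}$. I would therefore take $U\in\mathbb{C}^{n\times(n-1)}$ to be the matrix whose columns are $q_j$ for $j\neq i$, which is an admissible choice in Definition \ref{condvec1}. Any other admissible $U$ differs from this one by a unitary change of basis on the $(n-1)$-dimensional range, which leaves the quantity $\|(\mu I - U^HAU)^{-1}\|_2$ unchanged, so working with this specific $U$ loses no generality.

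With $U$ so chosen, $U^HAU = \mathrm{diag}(\lambda_j:j\neq i)$, so $\mu I - U^HAU$ is diagonal with nonzero entries $\mu-\lambda_j$ for $j\neq i$, and
\[
\|(\mu I - U^HAU)^{-1}\|_2 \,=\, \max_{j\neq i}\frac{1}{|\mu-\lambda_j|} \,=\, \left(\min_{\substack{\lambda\in\Lambda(A)\\ \lambda\neq\mu}}|\mu-\lambda|\right)^{-1}.
\]
Substitution into Definition \ref{condvec1} yields the stated formula. The only substantive ingredient is the fact that $\mathrm{span}\{x\}^{\perp}$ is $A$-invariant, a standard consequence of normality, which is precisely what allows $U^HAU$ to be diagonalized in the natural orthonormal basis supplied by $Q$. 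I do not anticipate any obstacle beyond this observation; everything else is direct computation.
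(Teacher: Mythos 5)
Your proof is correct and follows the same route the paper sketches: the paper merely remarks that the bound is attained ``because $A$ is unitarily diagonalizable'' and otherwise defers to Stewart, while your explicit choice of $U$ from the unitary eigenbasis, the identification of $\mathrm{Range}(A-\mu I)$, and the check that the quantity is independent of the admissible $U$ supply exactly the details that remark leaves implicit. The one caveat is the paper's rather than yours: Definition \ref{condvec1} as printed carries an extra outer inverse, so literal substitution would return the reciprocal of the stated formula; your computed value of $\|(\mu I - U^HAU)^{-1}\|_2$ is the quantity that actually matches the proposition and its later use.
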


Consider the tridiagonal Toeplitz matrix $T=(n;\sigma,\delta,\tau)$ and introduce the 
right and left unit eigenvectors,
\[
\widetilde{x}_h=\frac{x_h}{\|x_h\|},\quad \widetilde{y}_h=\frac{y_h}{\|y_h\|},
\quad h=1:n,
\]
where the vectors $x_h$ and $y_h$ are defined by (\ref{xhk}) and (\ref{yhk}), 
respectively. 

\begin{proposition}
Let the Toeplitz matrix $T=(n;\sigma,\delta,\tau)$ be normal. Then the condition number 
of $\widetilde{x}_h$ is given by
\begin{equation}\label{condvec}
\kappa(\widetilde{x}_h)=
\begin{cases}
        \left(4|\sigma|\sin \frac{\pi }{2(n+1)}\sin \frac{(2h-1)\pi }{2(n+1)}\right)^{-1}, & \text{for }   1< h\leq \frac n 2 \text{  or } h=n,\\
        \\
        \left(4|\sigma|\sin \frac{\pi }{2(n+1)}\sin \frac{(2h+1)\pi }{2(n+1)}\right)^{-1}, & \text{for }  h=1 \text{  or } \frac n 2< h<n.\\
\end{cases}
\end{equation} 
In particular, $\kappa(\widetilde{x}_h)$ depends only on $h$, $n$, and $|\sigma|$.
Moreover,
\[
\max_{h=1:n}\kappa(\widetilde{x}_h)=\left(4|\sigma|\sin\frac{\pi }{2(n+1)}
\sin\frac{3\pi }{2(n+1)}\right)^{-1}.
\]
The maximum is attained by the eigenvectors $\widetilde{x}_h$ associated with the four 
extremal eigenvalues with indices $h=1,2,n-1,n$. 
\end{proposition}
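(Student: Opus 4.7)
The plan is to combine the two ingredients already in place: the eigenvalue gap formula from Proposition \ref{prop1} and Stewart's eigenvector condition number formula from Proposition \ref{codvecdef}. Since the hypothesis is that $T=(n;\sigma,\delta,\tau)$ is normal, the remark following Proposition \ref{prop1} (see also \cite[Theorem 3.1]{NPR}) tells us that $|\sigma|=|\tau|$, so in particular $\sqrt{|\sigma\tau|}=|\sigma|$.

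First I would note that, because $T$ is normal, Proposition \ref{codvecdef} applies directly and yields
\[
\kappa(\widetilde{x}_h)=\Bigl(\min_{\lambda_j\neq\lambda_h}|\lambda_h-\lambda_j|\Bigr)^{-1},\qquad h=1:n.
\]
Then I would invoke Proposition \ref{prop1} to substitute the explicit case-split expression for $\min_{\lambda_j\neq\lambda_h}|\lambda_h-\lambda_j|$ in terms of $\sqrt{|\sigma\tau|}$, and simplify using $\sqrt{|\sigma\tau|}=|\sigma|$. This produces exactly the piecewise formula \eqref{condvec}, and the dependence of $\kappa(\widetilde{x}_h)$ only on $h$, $n$, and $|\sigma|$ follows at once, since these are the sole parameters appearing in the resulting expression.

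For the maximum statement, I would use the last assertion of Proposition \ref{prop1}, namely that the smallest value of $\min_{\lambda_j\neq\lambda_h}|\lambda_h-\lambda_j|$ over $h$ equals $4\sqrt{|\sigma\tau|}\sin\frac{\pi}{2(n+1)}\sin\frac{3\pi}{2(n+1)}$, and is achieved precisely by the gaps $|\lambda_1-\lambda_2|$ and $|\lambda_{n-1}-\lambda_n|$. Taking reciprocals and again replacing $\sqrt{|\sigma\tau|}$ by $|\sigma|$ gives the stated expression for $\max_h\kappa(\widetilde{x}_h)$. I would then verify that the maximizing indices are exactly $h\in\{1,2,n-1,n\}$ by checking the piecewise formula at these four values: for $h=1$ the second branch of \eqref{condvec} gives $\sin\frac{3\pi}{2(n+1)}$; for $h=2$ the first branch gives $\sin\frac{3\pi}{2(n+1)}$; for $h=n$ the first branch gives $\sin\frac{(2n-1)\pi}{2(n+1)}=\sin\frac{3\pi}{2(n+1)}$; and for $h=n-1$ the second branch gives the same value.

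There is no genuine obstacle here: once normality is translated into $|\sigma|=|\tau|$, the result is a direct specialization of the two previously established propositions. The only bookkeeping point requiring a moment of care is the symmetry identity $\sin\frac{(2n-1)\pi}{2(n+1)}=\sin\frac{3\pi}{2(n+1)}$ used to confirm that the extremal indices $h=n-1,n$ yield the same maximum as $h=1,2$.
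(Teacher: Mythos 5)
Your proposal is correct and follows essentially the same route as the paper, whose proof simply cites Proposition \ref{prop1}, Proposition \ref{codvecdef}, and the normality characterization $|\sigma|=|\tau|$ from \cite[Theorem 3.1]{NPR}. Your additional verification of the extremal indices via $\sin\frac{(2n-1)\pi}{2(n+1)}=\sin\frac{3\pi}{2(n+1)}$ is a detail the paper leaves implicit, but it is the same argument.
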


\begin{proof} 
The proof follows from Propositions \ref{prop1} and \ref{codvecdef}, by using the 
characterization in \cite[Theorem 3.1]{NPR}.
\end{proof}

Figure \ref{fig2} shows the condition numbers $\kappa(\widetilde{x}_h)$ of normalized eigenvectors
of a $100\times 100$ normal tridiagonal Toeplitz matrix with $|\sigma|=|\tau|=1$.

\begin{figure}[tbp]
\centering
\includegraphics[scale=0.60]{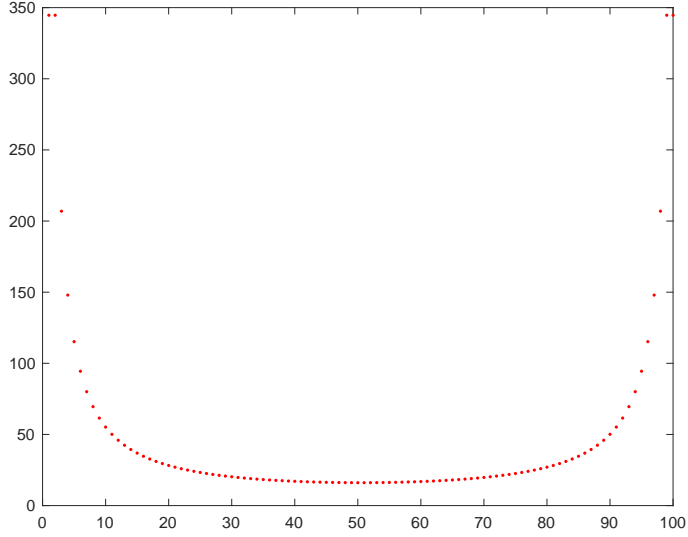}
\caption{Eigenvector condition numbers  for the matrix $T=(100;\exp i\theta_1,\delta,\exp i\theta_2)$, 
where $\delta\in\C$ and $\theta_1, \theta_2\in \R$ are arbitrarily chosen parameters, and $i=\sqrt{-1}$. 
The horizontal axis shows the index of the eigenvalues $\lambda_h$, $h=1:100$, and the vertical axis 
shows the condition numbers $\kappa(\widetilde{x}_h)$. The condition numbers are independent of 
$\delta$, $\theta_1$, and $\theta_2$.}\label{fig2}
\end{figure}

Let $T^{\varepsilon}=T+\varepsilon E$, where $\varepsilon\in\R$ is a constant of small 
magnitude, and $E\in{\mathbb C}^{n\times n}$ satisfies $\|E\|_F=1$. Introduce the 
unit pseudoeigenvector $\widetilde{x}^{\varepsilon}_h$ of $T^{\varepsilon}$ 
corresponding to the unit eigenvector $\widetilde{x}_h$ of $T$. Thus, there is a 
continuous mapping $t\to\widetilde{x}_h^t$ for $0\leq t\leq\varepsilon$ such that 
$\widetilde{x}_h^t=\widetilde{x}_h$ for $t=0$ and 
$\widetilde{x}_h^t=\widetilde{x}_h^\varepsilon$ for $t=\varepsilon$. We obtain from
(\ref{condvec}) that
\begin{equation}
0\leq\sin \theta_{\widetilde{x}_h,\widetilde{x}^{\varepsilon}_h}\leq
\begin{cases}
        \left(4|\sigma|\sin \frac{\pi }{2(n+1)}\sin \frac{(2h-1)\pi }{2(n+1)}\right)^{-1}\varepsilon, & \text{for }   1< h\leq \frac n 2 \text{  or } h=n,\\
        \\
        \left(4|\sigma|\sin \frac{\pi }{2(n+1)}\sin \frac{(2h+1)\pi }{2(n+1)}\right)^{-1}\varepsilon, & \text{for }  h=1 \text{  or } \frac n 2< h<n.\\
\end{cases}
\end{equation} 

\begin{proposition}\label{sinherm}
Let the matrix $T=(n;\sigma,\delta,\tau)$ be Hermitian. Given the unit pseudoeigenvector 
$\widetilde{x}^{\varepsilon}_h$, define the associated Rayleigh quotient,
\[
\widetilde{\lambda}_h^{\varepsilon}=
(\widetilde{x}^{\varepsilon}_h)^HT\widetilde{x}^{\varepsilon}_h,
\]
and introduce the associated residual norm
\[
r_h^{\varepsilon}=\|T \widetilde{x}^{\varepsilon}_h-\widetilde{\lambda}_h^{\varepsilon} 
\widetilde{x}^{\varepsilon}_h\|_2.
\]
Then
\begin{equation}
\frac{r_h^{\varepsilon}}{2|\sigma|\cos \frac{\pi}{n+1}} \leq 
\sin \theta_{\widetilde{x}_h,\widetilde{x}^{\varepsilon}_h}\leq 
\frac{r_h^{\varepsilon}}{\min_{k\neq h}|\lambda_k-\widetilde{\lambda}_h^{\varepsilon}|}\,.
\end{equation}
\end{proposition}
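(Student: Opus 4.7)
The plan is to exploit the Hermiticity of $T$: since $\tau=\bar\sigma$, we have $\sqrt{\sigma\tau}=|\sigma|$, and $T$ admits an orthonormal eigenbasis $\{\widetilde{x}_k\}_{k=1}^n$ with real eigenvalues $\lambda_k=\delta+2|\sigma|\cos\frac{k\pi}{n+1}$ given by \eqref{lamhT}. I would expand the unit pseudoeigenvector in this basis and reduce both inequalities to elementary scalar estimates.

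Writing $\widetilde{x}_h^\varepsilon=\sum_{k=1}^n c_k\widetilde{x}_k$ with $\sum_k|c_k|^2=1$, the definition of the principal angle between unit vectors gives $|c_h|^2=\cos^2\theta_{\widetilde{x}_h,\widetilde{x}_h^\varepsilon}$ and $\sum_{k\neq h}|c_k|^2=\sin^2\theta_{\widetilde{x}_h,\widetilde{x}_h^\varepsilon}$. A direct substitution into the definitions of the Rayleigh quotient and residual gives
\[
\widetilde{\lambda}_h^\varepsilon=\sum_{k=1}^n\lambda_k|c_k|^2,\qquad (r_h^\varepsilon)^2=\sum_{k=1}^n(\lambda_k-\widetilde{\lambda}_h^\varepsilon)^2|c_k|^2.
\]
These two identities are really the only ingredients; both inequalities arise by estimating the second sum in opposite directions.

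For the right-hand inequality (the upper bound on $\sin\theta_{\widetilde{x}_h,\widetilde{x}_h^\varepsilon}$) I would drop the $k=h$ term in the expression for $(r_h^\varepsilon)^2$ and lower-bound every remaining squared factor by $\bigl(\min_{j\neq h}|\lambda_j-\widetilde{\lambda}_h^\varepsilon|\bigr)^2$; the surviving $|c_k|^2$-sum then equals $\sin^2\theta_{\widetilde{x}_h,\widetilde{x}_h^\varepsilon}$ and the claim follows at once.

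For the left-hand inequality (the upper bound on $r_h^\varepsilon$) I would exploit that the Rayleigh quotient $\widetilde{\lambda}_h^\varepsilon$ minimizes $c\mapsto\|(T-cI)\widetilde{x}_h^\varepsilon\|_2$, so that the residual is orthogonal to $\widetilde{x}_h^\varepsilon$ and satisfies the Pythagorean identity $(r_h^\varepsilon)^2=\|(T-\delta I)\widetilde{x}_h^\varepsilon\|_2^2-(\widetilde{\lambda}_h^\varepsilon-\delta)^2$. The key structural observation is that \eqref{lamhT} yields $\|T-\delta I\|_2=2|\sigma|\cos\frac{\pi}{n+1}$, exactly the constant appearing in the stated bound. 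To manufacture the $\sin\theta_{\widetilde{x}_h,\widetilde{x}_h^\varepsilon}$ factor, I would decompose $\widetilde{x}_h^\varepsilon=\cos\theta_{\widetilde{x}_h,\widetilde{x}_h^\varepsilon}\widetilde{x}_h+\sin\theta_{\widetilde{x}_h,\widetilde{x}_h^\varepsilon}v$ with unit $v\perp\widetilde{x}_h$ (using that $\widetilde{x}_h^\perp$ is $T$-invariant by Hermiticity), write $(T-\delta I)\widetilde{x}_h^\varepsilon=\cos\theta(\lambda_h-\delta)\widetilde{x}_h+\sin\theta(T-\delta I)v$ as an orthogonal sum, and use the Rayleigh identity $\widetilde{\lambda}_h^\varepsilon-\delta=\cos^2\theta(\lambda_h-\delta)+\sin^2\theta\,v^H(T-\delta I)v$ to cancel the $\cos^2\theta$-contribution coming from $\lambda_h-\delta$. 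The main obstacle is precisely this cancellation: after the Pythagorean subtraction one is left with a combination of $\cos^2\theta(\lambda_h-v^HTv)^2$ and $\|(T-(v^HTv)I)v\|_2^2$, each to be bounded by $\|T-\delta I\|_2^2$ by noting that both $\lambda_h$ and $v^HTv$ lie in $[\delta-2|\sigma|\cos\frac{\pi}{n+1},\delta+2|\sigma|\cos\frac{\pi}{n+1}]$ and that the second piece is a Bhatia--Davis-type variance bounded by $\|T-\delta I\|_2^2$ on the invariant subspace $\widetilde{x}_h^\perp$.
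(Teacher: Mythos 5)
Your eigenbasis expansion and the identities for $\widetilde{\lambda}_h^{\varepsilon}$ and $(r_h^{\varepsilon})^2$ are correct, and your proof of the right-hand inequality is complete; it is exactly the standard argument behind the result the paper invokes. (The paper proves the entire proposition in one line by citing Parlett's Theorem 11.7.1 and computing $\mathrm{spread}(T):=\lambda_1-\lambda_n$.)

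The left-hand inequality is where your argument has a genuine gap, and the gap cannot be closed with the stated constant. Your orthogonal decomposition and Rayleigh-quotient cancellation do lead correctly to the identity
\[
(r_h^{\varepsilon})^2=\sin^2\theta\,\Bigl[\cos^2\theta\,\bigl(\lambda_h-v^HTv\bigr)^2+\bigl\|\bigl(T-(v^HTv)I\bigr)v\bigr\|_2^2\Bigr],
\]
but the bracket is not bounded by $\|T-\delta I\|_2^2$. Even if each summand were at most $\|T-\delta I\|_2^2$ as you assert, the sum would only yield $r_h^{\varepsilon}\le\sqrt{2}\,\|T-\delta I\|_2\sin\theta$; worse, the first summand is only controlled by $\cos^2\theta\,(\lambda_1-\lambda_n)^2=4\cos^2\theta\,\|T-\delta I\|_2^2$, because $\lambda_h$ and $v^HTv$ may sit at opposite ends of the spectrum. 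What your identity honestly delivers is $r_h^{\varepsilon}\le(\lambda_1-\lambda_n)\sin\theta$, i.e.\ Parlett's lower bound with the full spread $\lambda_1-\lambda_n=4|\sigma|\cos\frac{\pi}{n+1}$ in the denominator. The constant $2|\sigma|\cos\frac{\pi}{n+1}$ in the proposition (the paper's proof asserts $\lambda_1-\lambda_n=2|\sigma|\cos\frac{\pi}{n+1}$, but $\lambda_n=\delta-2|\sigma|\cos\frac{\pi}{n+1}$) is half the actual spread, and the inequality with that constant is false: for $T=(2;1,0,1)$ and $\widetilde{x}_h^{\varepsilon}=\cos\theta\,\widetilde{x}_1+\sin\theta\,\widetilde{x}_2$ one computes $r_h^{\varepsilon}=2\sin\theta\cos\theta$, which exceeds $2|\sigma|\cos\frac{\pi}{3}\,\sin\theta=\sin\theta$ for every small $\theta$. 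So your failure to complete this step reflects an error in the statement itself; the correct target for your method is the bound with denominator $4|\sigma|\cos\frac{\pi}{n+1}$.
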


\begin{proof}
The proof follows from \cite[Theorem 11.7.1]{P98} by observing that 
$\text{spread}(T):=\lambda_1-\lambda_n= 2|\sigma|\cos\frac{\pi}{n+1}$.
\end{proof}

We turn to the condition number of the eigenvectors of the matrix (\ref{Talbeta}) for
$\alpha$- and $\beta$-values of Table \ref{table1}. Using Proposition \ref{prop1bis}, we 
obtain the following expressions.

\begin{proposition}
Let the matrix $T_{\alpha,\beta}\in{\mathbb C}^{n\times n}$ be normal and let
$\widetilde{x}_h$, for $h=1:n$, be unit eigenvectors. Then
\begin{enumerate}[label=(\roman*)]
\item for $\alpha=0$ and $ \beta=\sqrt{\sigma\tau }$ or vice versa

\begin{equation*}
\kappa(\widetilde{x}_h)=
\begin{cases}
\left(4\sqrt{|\sigma\tau|}\sin \frac{\pi }{2n+1}\sin \frac{(2h-1)\pi }{2n+1}\right)^{-1}, & \text{for }   1< h\leq \frac n 2 \text{  or } h=n,\\
        \\
\left( 4\sqrt{|\sigma\tau|}\sin \frac{\pi }{2n+1}\sin \frac{(2h+1)\pi }{2n+1}\right)^{-1}, & \text{for }  h=1 \text{  or } \frac n 2< h<n;\\        
\end{cases}
\end{equation*} 

\item for $\alpha=0$ and $ \beta=-\sqrt{\sigma\tau }$ or vice versa

\begin{equation*}
\kappa(\widetilde{x}_h)= 
\begin{cases}
        \left(4\sqrt{|\sigma\tau|}\sin \frac{\pi }{2n+1}\sin \frac{2(h-1)\pi }{2n+1}\right)^{-1}, & \text{for }   1< h\leq \lceil \frac n 2 \rceil \text{  or } h=n,\\
        \\
        \left(4\sqrt{|\sigma\tau|}\sin \frac{\pi }{2n+1}\sin \frac{2h\pi }{2n+1}\right)^{-1}, & \text{for }  h=1 \text{  or } \lceil \frac n 2 \rceil< h<n;\\
\end{cases}
\end{equation*}

\item for $\alpha=\sqrt{\sigma\tau }$ and $ \beta=-\sqrt{\sigma\tau }$ or vice versa

\begin{equation*}
\kappa(\widetilde{x}_h)= 
\begin{cases}
        \left(4\sqrt{|\sigma\tau|}\sin \frac{\pi }{n}\sin \frac{(h-1)\pi }{n}\right)^{-1}, & \text{for }   1< h\leq \frac n 2 \text{  or } h=n,\\
        \\
        \left(4\sqrt{|\sigma\tau|}\sin \frac{\pi }{n}\sin \frac{h\pi }{n}\right)^{-1}, & \text{for }  h=1 \text{  or } \frac n 2< h<n;\\
\end{cases}
\end{equation*}

\item for $\alpha=\sqrt{\sigma\tau }$ and $ \beta=\sqrt{\sigma\tau }$

\begin{equation*}
\kappa(\widetilde{x}_h)= 
\begin{cases}
        \left(4\sqrt{|\sigma\tau|}\sin \frac{\pi }{2n}\sin \frac{(2h-1)\pi }{2n}\right)^{-1}, & \text{for }   1< h\leq \frac n 2 \text{  or } h=n,\\
        \\
        \left(4\sqrt{|\sigma\tau|}\sin \frac{\pi }{2n}\sin \frac{(2h+1)\pi }{2n}\right)^{-1}, & \text{for }  h=1 \text{  or } \frac n 2< h<n;\\
\end{cases}
\end{equation*}
\item for $\alpha=-\sqrt{\sigma\tau }$ and $ \beta=-\sqrt{\sigma\tau }$

\begin{equation*}
\kappa(\widetilde{x}_h)=  
\begin{cases}
        \left(4\sqrt{|\sigma\tau|}\sin \frac{\pi }{2n}\sin \frac{(2h-3)\pi }{2n}\right)^{-1}, & \text{for }   1< h\leq \lceil \frac n 2 \rceil \text{  or } h=n,\\
        \\
        \left(4\sqrt{|\sigma\tau|}\sin \frac{\pi }{2n}\sin \frac{(2h-1)\pi }{2n}\right)^{-1}, & \text{for }  h=1 \text{  or } \lceil \frac n 2 \rceil< h<n;\\
\end{cases}
\end{equation*}
\end{enumerate}
The maximal eigenvector condition numbers are reported in Table \ref{table5}.
\end{proposition}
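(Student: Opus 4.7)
The plan is to reduce everything to the two ingredients already established earlier in the paper: Proposition \ref{codvecdef}, which asserts that for a normal matrix the eigenvector condition number of a unit eigenvector associated with a simple eigenvalue $\mu$ equals $(\min_{\lambda\in\Lambda(A)\setminus\{\mu\}}|\mu-\lambda|)^{-1}$, and Proposition \ref{prop1bis}, which provides the minimum distance from $\lambda_h$ to the other eigenvalues of $T_{\alpha,\beta}$ for the eight choices of $(\alpha,\beta)$ listed in Table \ref{table1}. Since by hypothesis $T_{\alpha,\beta}$ is normal (and this hypothesis is non-empty in view of Proposition \ref{propnew}, which in particular gives $|\sigma|=|\tau|$ as a sufficient condition), the eigenvalues $\lambda_h$ in question are simple whenever the expressions given by Proposition \ref{prop1bis} are positive, so the hypotheses of Proposition \ref{codvecdef} are met.

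First I would invoke Proposition \ref{codvecdef} to rewrite
\[
\kappa(\widetilde{x}_h)=\Bigl(\min_{\lambda_j\neq\lambda_h}|\lambda_h-\lambda_j|\Bigr)^{-1},\qquad h=1:n.
\]
Then, case by case, I would substitute the closed-form expression for $\min_{\lambda_j\neq\lambda_h}|\lambda_h-\lambda_j|$ provided by the corresponding item of Proposition \ref{prop1bis}. The five items (i)--(v) of the proposition to be proved match, respectively, items (i)--(v) of Proposition \ref{prop1bis}, with the grouping of the cases where $\alpha$ and $\beta$ are exchanged justified by the fact that the eigenvalues (see Table \ref{table1}) coincide in each such pair, hence so do the minimum spacings. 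Taking reciprocals yields precisely the piecewise formulas claimed.

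For the maximal eigenvector condition number in each case, I would simply minimize the eigenvalue separation over $h=1:n$; equivalently, the maximum of $\kappa(\widetilde{x}_h)$ equals the reciprocal of the minimum inter-eigenvalue distance already recorded in Table \ref{table2}. In the two cases $\alpha=\beta=\sqrt{\sigma\tau}$ and $\alpha=\beta=-\sqrt{\sigma\tau}$, one must exclude the index $h=n$ (respectively $h=1$), where $\kappa(\lambda_h)=1$ was observed after Table \ref{table4}; this corresponds to the eigenvector $\widetilde{x}_h$ being exact and contributes no finite condition number in the present framework.

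The only mild obstacle is bookkeeping: one must verify that, for each of the eight $(\alpha,\beta)$ pairs, the partition of indices into the two subcases ($1<h\le n/2$ or $h=n$ versus $h=1$ or $n/2<h<n$, with $\lceil n/2\rceil$ in the appropriate items) correctly matches the partition used in Proposition \ref{prop1bis}, and that the trigonometric expressions carry over verbatim under the substitution; this is a routine check. No additional identities beyond those already used in the statement of Proposition \ref{prop1bis} are required.
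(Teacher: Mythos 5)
Your argument coincides with the paper's: the result is obtained exactly by combining Proposition \ref{codvecdef} (for a normal matrix, the condition number of a unit eigenvector is the reciprocal of the distance from its eigenvalue to the rest of the spectrum) with the gap formulas of Proposition \ref{prop1bis}, and the paper's justification consists of precisely this observation. One correction to your side remark on the maxima: the note following Table \ref{table4} concerns the eigenvalue condition numbers $\kappa(\lambda_h)$, not the eigenvector ones, and the index $h=n$ for $\alpha=\beta=\sqrt{\sigma\tau}$ (respectively $h=1$ for $\alpha=\beta=-\sqrt{\sigma\tau}$) must \emph{not} be excluded --- since $\sin\frac{(2n-1)\pi}{2n}=\sin\frac{\pi}{2n}$, it is in fact one of the indices at which the maximum $\left(4|\sigma|\sin^2\frac{\pi}{2n}\right)^{-1}$ recorded in Table \ref{table5} is attained.
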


\begin{table}
\begin{center}
    \begin{tabular}{c|c|c}
    $\alpha$ & $\beta$& $\max_{h=1:n}\kappa(\widetilde{x}_h)$  \\
    \hline 
    $0$&$\sqrt{\sigma\tau }$&$ \left(4|\sigma|\sin \frac{\pi }{2n+1}\sin \frac{2\pi }{2n+1}\right)^{-1}$\\
    $\sqrt{\sigma\tau }$&$0$&$ \left(4|\sigma|\sin \frac{\pi }{2n+1}\sin \frac{2\pi }{2n+1}\right)^{-1}$\\
    $0$&$-\sqrt{\sigma\tau }$&$ \left(4|\sigma|\sin \frac{\pi }{2n+1}\sin \frac{2\pi }{2n+1}\right)^{-1}$\\
    $-\sqrt{\sigma\tau }$&$0$&$ \left(4|\sigma|\sin \frac{\pi }{2n+1}\sin \frac{2\pi }{2n+1}\right)^{-1}$\\
    $\sqrt{\sigma\tau }$&$-\sqrt{\sigma\tau }$&$ \left(4|\sigma|\sin ^2\frac{\pi }{n}\right)^{-1}$\\
    $-\sqrt{\sigma\tau }$&$\sqrt{\sigma\tau }$&$ \left(4|\sigma|\sin^2 \frac{\pi }{n}\right)^{-1}$\\
    $\sqrt{\sigma\tau }$&$\sqrt{\sigma\tau }$&$ \left(4|\sigma|\sin^2 \frac{\pi }{2n}\right)^{-1}$\\
    $-\sqrt{\sigma\tau }$&$-\sqrt{\sigma\tau }$&$ \left(4|\sigma|\sin^2 \frac{\pi }{2n}\right)^{-1}$
\end{tabular}
\end{center}
\caption{Maximal eigenvector condition numbers for the eigenvectors of the matrix 
(\ref{Talbeta}) for several choices of $\alpha$ and $\beta$.}\label{table5}
\end{table}

\section{Sensitivity to structured perturbations} \label{sec4}
Observe that the smaller $0<|\sigma/\tau|<1$ is, the larger is the first component of the
unit right eigenvector $\widetilde{x}_h$ and the last component of the unit left 
eigenvector $\widetilde{y}_h$. Similarly, the larger $1<|\sigma/\tau|<\infty$ is, the 
larger is the last component of $\widetilde{x}_h$ and the first component of 
$\widetilde{y}_h$. 

Consider the Wilkinson perturbation 
\[
W_h=\widetilde{y}_h\widetilde{x}_h^H
\]
of the matrix $T$ defined by (\ref{TRap}) associated with the eigenvalue $\lambda_h$. This
is a unit-norm perturbation of $T$ that yields the largest perturbation in $\lambda_h$; 
see, e.g., \cite{Wi}. The entries of largest magnitude of $W_h$ are in the bottom-left
corner of $W_h$ when $|\sigma/\tau|<1$ and in the top-right corner when $|\sigma/\tau|>1$.
The entries of $W_h$ close to the diagonal are of small magnitude. In particular, the 
entries of largest magnitude of $W_h$ are not present in $W_h |_{{\mathcal T}}$, the 
orthogonal projection of $W_h$ in the subspace ${\mathcal T}$ of tridiagonal Toeplitz 
matrices. This projection is used in the following proposition which summarizes results 
from \cite{NP2} and yields useful formulations for both the ${\mathcal T}$-structured 
eigenvalue condition number (see, e.g., \cite{KKT}) and the worst-case ${\mathcal T}$-structured 
perturbations \cite{NP1,NP2}.

\begin{proposition}\label{lem:p1}
Let $\lambda_h$ be a simple eigenvalue of a Toeplitz matrix 
$T\in{\mathcal T}\subset\mathbb{C}^{n\times n}$ with associated unit right and left eigenvectors 
$\widetilde{x}_h$ and $\widetilde{y}_h$, respectively. Given any matrix 
$E\in{\mathcal T}$ with $\|E\|_F = 1$, let $\lambda_h(t)$ be an eigenvalue of $T+tE$ converging
to $\lambda_h$ as $t\to 0$. Then
\[
|\dot\lambda_h(0)| \le \max\left\{\left|\frac{\widetilde{y}_h^HE\widetilde{x}_h}{\widetilde{y}_h^H\widetilde{x}_h}\right| , \; \|E\|_F = 1,\, 
E \in \mathcal T\,\right\} = \frac{\|W_h |_{{\mathcal T}}\|_F}{|\widetilde{y}_h^H\widetilde{x}_h|}
\]
and
\[
\dot\lambda_h(0)=\frac{\|W_h |_{{\mathcal T}}\|_F}{|\widetilde{y}_h^H\widetilde{x}_h|} 
\qquad \mathrm{if} \qquad E=\eta\frac{W_h |_{{\mathcal T}}}{\|W_h |_{{\mathcal T}}\|_F},
\]
for any unimodular $\eta \in {\mathbb C}$. Here $\dot\lambda_h(t)$ denotes the derivative 
of $\lambda_h(t)$ with respect to the parameter $t$.
\end{proposition}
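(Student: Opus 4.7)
The plan is to combine the standard first-order eigenvalue perturbation formula with a Frobenius-inner-product reformulation, and then invoke Cauchy--Schwarz on the constrained subspace $\mathcal{T}$.

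First I would recall that, because $\lambda_h$ is a simple eigenvalue and $\lambda_h(t)$ is the branch converging to it, analytic perturbation theory (see, e.g., Wilkinson \cite{Wi}) yields the classical formula
\[
\dot\lambda_h(0)=\frac{\widetilde{y}_h^{H}E\,\widetilde{x}_h}{\widetilde{y}_h^{H}\widetilde{x}_h}.
\]
This immediately gives the obvious upper bound
\[
|\dot\lambda_h(0)|\le\max\!\left\{\left|\frac{\widetilde{y}_h^{H}E\,\widetilde{x}_h}{\widetilde{y}_h^{H}\widetilde{x}_h}\right|:\|E\|_F=1,\,E\in\mathcal{T}\right\},
\]
so the task reduces to evaluating the structured maximum in the numerator.

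Next I would rewrite the numerator as a Frobenius inner product. Using the cyclic property of the trace,
\[
\widetilde{y}_h^{H}E\,\widetilde{x}_h=\operatorname{tr}\!\bigl(E\,\widetilde{x}_h\widetilde{y}_h^{H}\bigr)=\operatorname{tr}\!\bigl((\widetilde{y}_h\widetilde{x}_h^{H})^{H}E\bigr)=\langle E,W_h\rangle_F.
\]
Because $E\in\mathcal{T}$ and the projection $W_h\big|_{\mathcal{T}}$ is the orthogonal projection of $W_h$ onto $\mathcal{T}$ in the Frobenius inner product, the component $W_h-W_h\big|_{\mathcal{T}}$ is Frobenius-orthogonal to every $E\in\mathcal{T}$. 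Hence
\[
\langle E,W_h\rangle_F=\langle E,W_h\big|_{\mathcal{T}}\rangle_F,
\]
and Cauchy--Schwarz together with $\|E\|_F=1$ gives $|\langle E,W_h\rangle_F|\le\|W_h\big|_{\mathcal{T}}\|_F$. Dividing by $|\widetilde{y}_h^{H}\widetilde{x}_h|$ yields the claimed bound.

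Finally, I would verify sharpness by plugging in $E=\eta\,W_h\big|_{\mathcal{T}}/\|W_h\big|_{\mathcal{T}}\|_F$ with $|\eta|=1$. This $E$ lies in $\mathcal{T}$ and is of unit Frobenius norm, and a direct computation gives
\[
\widetilde{y}_h^{H}E\,\widetilde{x}_h=\eta\,\frac{\langle W_h\big|_{\mathcal{T}},W_h\rangle_F}{\|W_h\big|_{\mathcal{T}}\|_F}=\eta\,\|W_h\big|_{\mathcal{T}}\|_F,
\]
so $|\dot\lambda_h(0)|$ attains the asserted value for any unimodular $\eta$. The only real subtlety is the passage from $\langle E,W_h\rangle_F$ to $\langle E,W_h\big|_{\mathcal{T}}\rangle_F$, which hinges on the Frobenius-orthogonality of the tridiagonal-Toeplitz subspace $\mathcal{T}$ to its orthogonal complement; once this is in place, everything else is essentially Cauchy--Schwarz.
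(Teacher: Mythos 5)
Your proof is correct. The paper itself gives no proof of this proposition -- it explicitly ``summarizes results from'' the reference [NP2] -- and your argument (first-order perturbation formula for a simple eigenvalue, rewriting $\widetilde{y}_h^HE\widetilde{x}_h$ as the Frobenius inner product $\langle E,W_h\rangle_F$, replacing $W_h$ by its orthogonal projection $W_h|_{\mathcal T}$ since $E\in\mathcal T$, and applying Cauchy--Schwarz with the obvious extremal $E$) is precisely the standard route taken in that reference. One minor point: as you implicitly note, the final equality is really attained in modulus, $|\dot\lambda_h(0)|=\|W_h|_{\mathcal T}\|_F/|\widetilde{y}_h^H\widetilde{x}_h|$, since the phase of $\dot\lambda_h(0)$ still depends on $\eta$ and on $\arg(\widetilde{y}_h^H\widetilde{x}_h)$; your version is the more careful reading of the statement.
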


It follows from Proposition \ref{lem:p1} that the ${\mathcal T}$-structured condition number of the eigenvalue 
$\lambda_h$ of the tridiagonal Toeplitz matrix $T$ is given by
\[
\kappa_{{\mathcal T}}(\lambda_{h} )=\kappa(\lambda_{h})\|W_h |_{{\mathcal T}}\|_F.
\]
This expression shows that the ${\mathcal T}$-structured condition number 
$\kappa_{{\mathcal T}}(\lambda_{h})$ may be small even when the traditional condition
number $\kappa (\lambda_h)$ is large. Thus, an eigenvalue $\lambda_h $ may be much more 
sensitive to a general perturbation of $T$ than to a structured perturbation.  
The worst-case structured perturbation \cite{NP2} is given by the structured analogue of
the Wilkinson perturbation  
\[
W_h |_{\widehat{\mathcal T}}:=\frac{W_h |_{{\mathcal T}}}{\|W_h |_{{\mathcal T}}\|_F}.
\]
We have the following result.

\begin{proposition}\label{propstr}
The ${\mathcal T}$-structured condition number of a simple  eigenvalue $\lambda_h$ of a
tridiagonal Toeplitz matrix $T=(n;\sigma,\delta,\tau)$ is given by
\begin{equation}\label{condnorm}
\kappa_{\mathcal T}(\lambda_{h} )=\sqrt{\frac{1}{n}+\frac{1}{n-1}\left(\left|\frac{\sigma}{\tau}\right|+\left|\frac{\tau}{\sigma}\right|\right)\cos^2 \frac{h\pi}{n+1}}.
\end{equation} 
In particular, $\kappa_{\mathcal T}(\lambda_{h} )$ only depends on $h$, $n$, and the 
ratio $\left|\frac{\sigma}{\tau}\right|$.
\end{proposition}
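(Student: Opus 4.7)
The plan is to combine the already-established formula
$\kappa_{\mathcal T}(\lambda_h)=\kappa(\lambda_h)\,\|W_h|_{\mathcal T}\|_F$
with the expression $\kappa(\lambda_h)=\frac{2}{n+1}\|x_h\|_2\|y_h\|_2$
coming from Section \ref{sec2}, so that it suffices to compute the Frobenius norm of the orthogonal projection of $W_h=\widetilde{y}_h\widetilde{x}_h^{H}$ onto the three-dimensional subspace $\mathcal T$ and check that the factors $\|x_h\|_2\|y_h\|_2$ and $(n+1)/2$ cancel in the product.

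First I would fix an orthogonal Frobenius basis of $\mathcal T$, namely the identity $I$ and the two shift matrices $S_{-},S_{+}$ (ones on the sub- and super-diagonal, respectively), whose squared Frobenius norms are $n$, $n-1$, and $n-1$. Consequently
\[
\|W_h|_{\mathcal T}\|_F^{2}=\frac{|\langle W_h,I\rangle_F|^{2}}{n}+\frac{|\langle W_h,S_{+}\rangle_F|^{2}}{n-1}+\frac{|\langle W_h,S_{-}\rangle_F|^{2}}{n-1}.
\]
Using the closed forms \eqref{xhk} and \eqref{yhk}, each inner product becomes a single sum because the products $\widetilde{y}_{h,i}\overline{\widetilde{x}_{h,j}}$ collapse the $\sqrt{\sigma/\tau}$ powers cleanly: on the diagonal ($j=i$) the $\sigma,\tau$ factors cancel completely, on the super-diagonal ($j=i+1$) they leave a net factor $(\bar\sigma/\bar\tau)^{1/2}$, and on the sub-diagonal ($j=i-1$) a net factor $(\bar\tau/\bar\sigma)^{1/2}$. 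Taking moduli squared therefore produces exactly $|\sigma/\tau|$ and $|\tau/\sigma|$, which is where the desired symmetric sum comes from.

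The essentially trigonometric part of the argument, and the main computational obstacle, is the off-diagonal sum
\[
\sum_{i=1}^{n-1}\sin\frac{hi\pi}{n+1}\sin\frac{h(i+1)\pi}{n+1}=\frac{n+1}{2}\cos\frac{h\pi}{n+1};
\]
I would establish this by product-to-sum, obtaining $\frac{n-1}{2}\cos\frac{h\pi}{n+1}-\frac{1}{2}\sum_{i=1}^{n-1}\cos\frac{(2i+1)h\pi}{n+1}$, and then summing the geometric series of complex exponentials, where the identity $\sin\frac{2nh\pi}{n+1}=-\sin\frac{2h\pi}{n+1}$ causes the boundary contribution to simplify and produces the extra $\cos\frac{h\pi}{n+1}$ needed to promote $\frac{n-1}{2}$ to $\frac{n+1}{2}$. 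The diagonal sum is already \eqref{trig1}, so no new work is required there.

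Plugging these evaluations back in gives
\[
\|W_h|_{\mathcal T}\|_F^{2}=\frac{((n+1)/2)^{2}}{\|x_h\|_2^{2}\|y_h\|_2^{2}}\left[\frac{1}{n}+\frac{1}{n-1}\Bigl(\Bigl|\frac{\sigma}{\tau}\Bigr|+\Bigl|\frac{\tau}{\sigma}\Bigr|\Bigr)\cos^{2}\frac{h\pi}{n+1}\right],
\]
and multiplying by $\kappa(\lambda_h)=2\|x_h\|_2\|y_h\|_2/(n+1)$ the prefactor collapses to $1$, yielding \eqref{condnorm}. Since the right-hand side does not involve $\|x_h\|_2$ or $\|y_h\|_2$ individually, the claimed dependence only on $h$, $n$, and $|\sigma/\tau|$ is immediate.
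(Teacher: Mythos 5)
Your proposal is correct and follows essentially the same route as the paper: project the Wilkinson perturbation $W_h=\widetilde{y}_h\widetilde{x}_h^H$ orthogonally onto $\mathcal T$ (the paper writes the resulting diagonal, sub-, and super-diagonal entries $\delta_h,\sigma_h,\tau_h$ explicitly, which is exactly your inner-product/basis computation), evaluate them via \eqref{trig1} and \eqref{id}, and multiply $\|W_h|_{\mathcal T}\|_F$ by $\kappa(\lambda_h)$ so the norms $\|x_h\|_2\|y_h\|_2$ cancel. The only difference is that you prove the identity \eqref{id} by product-to-sum and a geometric series, whereas the paper simply cites it from \cite[Appendix A]{BGN}.
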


\begin{proof}
Let $\sigma_h$, $\delta_h$, and $\tau_h$ denote the subdiagonal, diagonal, and 
superdiagonal entries of $W_h|_{{\mathcal T}}$, respectively. Then
\begin{eqnarray*}
\sigma_h&=&\frac{\sqrt{\frac{\bar{\tau}}{\bar{\sigma}}}
\sum_{k=1}^{n-1}\sin\frac{hk\pi}{n+1}\sin\frac{h(k+1)\pi}{n+1}}{(n-1)
\sqrt{\sum_{k=1}^{n} \left|\frac{\sigma}{\tau}\right|^{k}\sin^{2} \frac{hk\pi}{n+1}
\cdot \sum_{k=1}^{n}\left|\frac{\tau}{\sigma}\right|^{k}\sin^{2}
\frac{hk\pi}{n+1}}}\\
&=&\frac{(n+1)\sqrt{\frac{\bar{\tau}}{\bar{\sigma}}}
\cos \frac{h\pi}{n+1} }{2(n-1)\sqrt{\sum_{k=1}^{n}
\left|\frac{\sigma}{\tau}\right|^{k}\sin^{2} \frac{hk\pi}{n+1}
\cdot \sum_{k=1}^{n}\left|\frac{\tau}{\sigma}\right|^{k}\sin^{2}
\frac{hk\pi}{n+1}}}, \\
\delta_h&=&\frac{n+1}{2n\sqrt{\sum_{k=1}^{n}
\left|\frac{\sigma}{\tau}\right|^{k}\sin^{2} \frac{hk\pi}{n+1}
\cdot \sum_{k=1}^{n}\left|\frac{\tau}{\sigma}\right|^{k}\sin^{2}
\frac{hk\pi}{n+1}}},\\ 
\tau_h&=&\frac{\sqrt{\frac{\bar{\sigma}}{\bar{\tau}}}\sum_{k=1}^{n-1} \sin \frac{hk\pi}{n+1} \sin \frac{h(k+1)\pi}{n+1}}{(n-1)\sqrt{\sum_{k=1}^{n}
\left|\frac{\sigma}{\tau}\right|^{k}\sin^{2} \frac{hk\pi}{n+1}
\cdot \sum_{k=1}^{n}\left|\frac{\tau}{\sigma}\right|^{k}\sin^{2}
\frac{hk\pi}{n+1}}}\\
&=&\frac{(n+1)\sqrt{\frac{\bar{\sigma}}{\bar{\tau}}}\cos \frac{h\pi}{n+1} }{2(n-1)\sqrt{\sum_{k=1}^{n}
\left|\frac{\sigma}{\tau}\right|^{k}\sin^{2} \frac{hk\pi}{n+1}
\cdot \sum_{k=1}^{n}\left|\frac{\tau}{\sigma}\right|^{k}\sin^{2}
\frac{hk\pi}{n+1}}}.
\end{eqnarray*}
The above expressions were obtained by exploiting the trigonometric identities 
\eqref{trig1} and
\begin{equation}\label{id}
\sum_{k=1}^{n-1} \sin \frac{hk\pi}{n+1} \sin \frac{h(k+1)\pi}{n+1}=
\frac{n+1}{2}\cos\frac{h\pi}{n+1},\qquad h=1:n;
\end{equation}
see, e.g., \cite[Appendix A]{BGN}. Hence,
\begin{eqnarray*}
\|W_h |_{{\mathcal T}}\|_F&=&\sqrt{n|\delta_h|^2+(n-1)|\sigma_h|^2+(n-1)|\tau_h|^2}\\\\
&=&\frac{\frac{n+1}{2}\sqrt{\frac{1}{n}+\frac{1}{n-1}
\left(\left|\frac{\sigma}{\tau}\right|+\left|\frac{\tau}{\sigma}\right|\right)
\cos^2 \frac{h\pi}{n+1}}}{\sqrt{\sum_{k=1}^{n}
\left|\frac{\sigma}{\tau}\right|^{k}\sin^{2} \frac{hk\pi}{n+1}
\cdot \sum_{k=1}^{n}\left|\frac{\tau}{\sigma}\right|^{k}\sin^{2}
\frac{hk\pi}{n+1}}}.
\end{eqnarray*}
Finally $\kappa_{{\mathcal T}}(\lambda_{h})$ is the product of $\kappa(\lambda_{h})$
and $\|W_h |_{{\mathcal T}}\|_F$. The proof now follows by using (\ref{klamhT}).
\end{proof}

\subsection{Eigenvector structured sensitivity in the normal case}
When $E$ is a (tridiagonal Toeplitz) structured perturbation of $T$, the perturbed matrix 
$T^{\varepsilon}=T+\varepsilon E$ is a tridiagonal Toeplitz matrix. Assume that $T$ is
normal. Unfortunately, $T^{\varepsilon}=(n;\sigma^\varepsilon,\delta^\varepsilon,\tau^\varepsilon)$ 
might not be normal because $|\sigma^{\varepsilon}|$ may differ from $|\tau^{\varepsilon}|$. For the 
components of the eigenvector $x^{\varepsilon}_h=[x^{\varepsilon}_{h,1},x^{\varepsilon}_{h,2},\ldots,
x^{\varepsilon}_{h,n}]^T$ associated with the $h$th eigenvalue of $T^{\varepsilon}$, we 
have 
\[
x^{\varepsilon}_{h,k}=\left(\sqrt{\frac{\sigma^{\varepsilon}}
{\tau^{\varepsilon}}}\right)^{k}\sin \frac{hk\pi}{n+1},\quad k=1:n,\quad h=1:n,
\]
so that
\begin{equation}\label{cosvec}
\cos\theta_{\widetilde{x}_h,\widetilde{x}^{\varepsilon}_h}=
\frac{\left|\sum_{k=1}^n \left(\sqrt{\frac{\bar{\sigma}}{\bar{\tau }}}\right)^{k}
\left(\sqrt{\frac{\sigma^{\varepsilon}}{\tau^{\varepsilon}}}\right)^{k}\sin^2 
\frac{hk\pi}{n+1}\right|}{\sqrt{\frac{n+1}{2}\sum_{k=1}^{n}
\left|\frac{\sigma^{\varepsilon}}{\tau ^{\varepsilon}}\right|^{k}\sin^{2}
\frac{hk\pi}{n+1}}} ,\quad
h=1:n,
\end{equation}
where $\widetilde{x}_h$ and $\widetilde{y}_h$ are normalized vectors.
Notice that the perturbations induced in the eigenvectors do not depend on 
$\delta^{\varepsilon}$. In fact, the induced perturbations only depend on the ratio 
$\frac{\sigma^{\varepsilon}}{\tau ^{\varepsilon}}$. 

\begin{proposition}
The right and left eigenvectors of normal tridiagonal Toeplitz matrices $T=(n; \sigma,\delta, \tau)$ only  depend on the dimension $n$ and on the angle $\theta=\arg(\sigma)-\arg(\tau)$.
\end{proposition}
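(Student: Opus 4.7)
The plan is to apply the normality characterization \cite[Theorem 3.1]{NPR} already invoked in the Remark after Proposition \ref{prop1}, which tells us that a tridiagonal Toeplitz matrix $T=(n;\sigma,\delta,\tau)$ is normal precisely when $|\sigma|=|\tau|$ (assuming $\sigma\tau\ne 0$; the degenerate case where one of them vanishes can be handled separately, using the eigenvector description given in Section \ref{sec1}). Once $|\sigma|=|\tau|$, the ratio $\sigma/\tau$ is a unimodular complex number, and this is exactly what makes the explicit formulas \eqref{xhk} and \eqref{yhk} collapse to expressions depending only on $n$ and the phase $\theta=\arg(\sigma)-\arg(\tau)$.

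Concretely, writing $\sigma/\tau=e^{i\theta}$, any consistent branch of $\sqrt{\sigma/\tau}$ yields $(\sqrt{\sigma/\tau})^k=e^{ik\theta/2}$, so \eqref{xhk} gives
\[
x_{h,k} \;=\; e^{ik\theta/2}\sin\frac{hk\pi}{n+1},\qquad k,h=1:n,
\]
which manifestly depends only on $n$ and $\theta$ (beyond the indices). For the left eigenvector I would note that $\bar{\tau}/\bar{\sigma}=\overline{\tau/\sigma}=\overline{e^{-i\theta}}=e^{i\theta}$, so the same substitution in \eqref{yhk} produces
\[
y_{h,k} \;=\; e^{ik\theta/2}\sin\frac{hk\pi}{n+1},\qquad k,h=1:n,
\]
again a function of $n$ and $\theta$ alone. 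In particular $x_h=y_h$, which is the expected consequence of the fact that the diagonal similarity used in the Remark makes $T$ unitarily similar to a symmetric matrix.

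There is essentially no obstacle in this proof; the only delicate point is the choice of branch for $\sqrt{\sigma/\tau}$. Different branches differ by a sign, which merely rescales the eigenvector by a unimodular constant and so does not affect the eigenvector itself as an element of projective space. I would mention this explicitly to be safe, and then conclude that all of $\sigma$, $\tau$, and $\delta$ enter the eigenvectors only through $n$ and $\theta$, with $\delta$ playing no role at all (as observed in the discussion preceding the proposition, where \eqref{cosvec} was derived for the structured perturbation analysis).
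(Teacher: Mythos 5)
Your proof is correct and follows essentially the same route as the paper: normality forces $|\sigma|=|\tau|$, so $\sigma/\tau=e^{i\theta}$ and the formulas \eqref{xhk} and \eqref{yhk} reduce to $x_{h,k}=y_{h,k}=e^{ik\theta/2}\sin\frac{hk\pi}{n+1}$, which is exactly the paper's displayed identity \eqref{xhksyme}. Your added remarks on the branch of the square root and the degenerate case $\sigma\tau=0$ are sensible extras but do not change the argument.
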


\begin{proof}
From (\ref{xhk}) and (\ref{yhk}), it is clear that, given the dimension of the matrix, the
ratio $\sigma/\tau$ uniquely determines the right and left eigenvectors of $T$ up to a 
scaling factor. Since $|\sigma|=|\tau|$, one has 
\begin{equation}\label{xhksyme}
x_{h,k}=y_{h,k}=e^{i\frac{k}{2}\theta}\sin \frac{hk\pi}{n+1},\quad
k=1:n,\quad
h=1:n.
\end{equation}
\end{proof}

\begin{remark} \label{tht}
When $T$ is Hermitian, we have $\theta=2\arg(\sigma)$, whereas in the skew-Hermitian case, one has $\theta=2\arg(\sigma)-\pi$. 
\end{remark}

\begin{proposition}\label{herm}
If the perturbation $\varepsilon E$ of the Hermitian matrix 
$T=(n;\sigma,\delta,\bar{\sigma})$ has the same structure as $T$, then the right 
eigenvector $x^{\varepsilon}_h$ [the left eigenvector $y^{\varepsilon}_h$] associated to
the $h$th eigenvalue of 
\[
T^{\varepsilon}:=T+\varepsilon E=
(n;\sigma^{\varepsilon},\delta^{\varepsilon},\bar{\sigma}^{\varepsilon})
\]
has the components
\[
x^{\varepsilon}_{h,k}=y^{\varepsilon}_{h,k}=
e^{ik\text{arg}(\sigma^{\varepsilon})}\sin \frac{hk\pi}{n+1},\quad k=1:n,
\]
for $h=1:n$. Moreover, the associated Rayleigh quotient is given by 
\begin{equation}\label{lhkeps1}
\widetilde{\lambda}_h^{\varepsilon}:=\frac{x^{\varepsilon H}_hTx^{\varepsilon}_h}
{x^{\varepsilon H}_hx^{\varepsilon}_h}=\delta + 2|\sigma
|\cos(\text{arg}(\sigma)-\text{arg}(\sigma^{\varepsilon}))\cos \frac{h\pi}{n+1},
\quad h=1:n,
\end{equation}
and the following inequalities hold
$$
\frac{\|Tx^{\varepsilon}_h - \widetilde{\lambda}_h^{\varepsilon} 
x^{\varepsilon}_h\|_2}{\sqrt{2(n+1)}\cos \frac{\pi}{n+1}} \leq 
\sin \theta_{\widetilde{x}_h,\widetilde{x}^{\varepsilon}_h}\leq 
\frac{\|T x^{\varepsilon}_h - \widetilde{\lambda}_h^{\varepsilon} 
x^{\varepsilon}_h\|_2}{\sqrt{2(n+1)}|\sigma
|\left|1-\cos(\text{arg}(\sigma)-\text{arg}(\sigma^{\varepsilon}))\right|
\cos \frac{h\pi}{n+1}}\,.
$$
\end{proposition}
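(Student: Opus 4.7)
The plan is to handle the three claims of the proposition in turn, each by reducing to identities or results already established in the paper.

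First, a structured perturbation preserves the Hermitian tridiagonal Toeplitz form, so $T^\varepsilon = (n;\sigma^\varepsilon,\delta^\varepsilon,\bar\sigma^\varepsilon)$. The stated expression for $x^\varepsilon_{h,k}$ (which by Hermiticity coincides with $y^\varepsilon_{h,k}$) then follows directly from \eqref{xhk}, since $\sqrt{\sigma^\varepsilon/\bar\sigma^\varepsilon}=e^{i\arg\sigma^\varepsilon}$.

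For the Rayleigh quotient I would expand $x_h^{\varepsilon H} T x^\varepsilon_h$ by $(Tx^\varepsilon_h)_k = \sigma x^\varepsilon_{h,k-1} + \delta x^\varepsilon_{h,k} + \bar\sigma x^\varepsilon_{h,k+1}$, with the boundary convention $x^\varepsilon_{h,0} = x^\varepsilon_{h,n+1} = 0$. Identity \eqref{trig1} then gives $\|x^\varepsilon_h\|_2^2 = (n+1)/2$ together with the diagonal contribution $\delta\cdot(n+1)/2$. After pulling out the phases $e^{\mp i\arg\sigma^\varepsilon}$, the two remaining off-diagonal sums reduce via \eqref{id} (after an index shift, in which the $k=1$ and $k=n$ boundary sines vanish) to $\tfrac{n+1}{2}\cos(h\pi/(n+1))$ each. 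Combining the scalar factors through $\sigma e^{-i\arg\sigma^\varepsilon}+\bar\sigma e^{i\arg\sigma^\varepsilon} = 2|\sigma|\cos(\arg\sigma-\arg\sigma^\varepsilon)$ and dividing by $(n+1)/2$ then yields \eqref{lhkeps1}.

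For the sandwich inequalities I would apply Proposition \ref{sinherm} to the unit pseudoeigenvector $\widetilde{x}^\varepsilon_h = x^\varepsilon_h/\sqrt{(n+1)/2}$. Its unit-norm residual is $\|Tx^\varepsilon_h - \widetilde\lambda_h^\varepsilon x^\varepsilon_h\|_2/\sqrt{(n+1)/2}$, and the identity $2\sqrt{(n+1)/2}=\sqrt{2(n+1)}$ converts the lower bound of Proposition \ref{sinherm} into the stated one. For the upper bound I would substitute $|\lambda_h-\widetilde\lambda_h^\varepsilon|=2|\sigma||1-\cos(\arg\sigma-\arg\sigma^\varepsilon)|\cos(h\pi/(n+1))$ (an immediate consequence of \eqref{lhkeps1} and \eqref{lamhT}) in place of $\min_{k\neq h}|\lambda_k-\widetilde\lambda_h^\varepsilon|$; this gives a valid, if coarser, bound provided $\widetilde\lambda_h^\varepsilon$ is closer to $\lambda_h$ than to any other eigenvalue, which is automatic for small enough $\varepsilon$. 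The main obstacle is the trigonometric bookkeeping in the Rayleigh-quotient step, namely organizing the two off-diagonal sums so that \eqref{id} applies cleanly after reindexing; once this is handled, the rest of the argument is routine.
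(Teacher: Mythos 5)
Your proposal follows essentially the same route as the paper: the eigenvector components are read off from the closed form \eqref{xhk} applied to the Hermitian matrix $T^{\varepsilon}$, the Rayleigh quotient is evaluated using the identities \eqref{trig1} and \eqref{id}, and the two-sided bounds are obtained by feeding the resulting residual and gap into Proposition \ref{sinherm}. The only minor divergence is in the gap term of the upper bound, where the paper asserts the equality $\min_{k\neq h}|\lambda_k-\widetilde{\lambda}_h^{\varepsilon}|=|\lambda_h-\widetilde{\lambda}_h^{\varepsilon}|$ while you instead observe that substituting $|\lambda_h-\widetilde{\lambda}_h^{\varepsilon}|$ merely coarsens the bound; your phrasing is the more defensible of the two.
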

\begin{proof}
If $T$ is Hermitian, then $T^{\varepsilon}$ is Hermitian as well (i.e., 
$\tau^{\varepsilon}=\bar{\sigma^{\varepsilon}}$). The angle $\theta$ in (\ref{xhksyme}) is
equal to $2\arg(\sigma^{\varepsilon})$; see Remark \ref{tht}. Further, one has
$$
\widetilde{\lambda}_h^{\varepsilon}=\frac{\delta \sum_{k=1}^{n-1} \sin^2 \frac{hk\pi}{n+1} +(\sigma e^{-i\text{arg}(\sigma^{\varepsilon})}+\bar{\sigma}e^{i\text{arg}(\sigma^{\varepsilon})})\sum_{k=1}^{n-1} \sin \frac{hk\pi}{n+1} \sin \frac{h(k+1)\pi}{n+1}}{\frac{n+1}{2}}.
 $$
Exploiting the identities \eqref{trig1} and (\ref{id}), we obtain 
$$
\widetilde{\lambda}_h^{\varepsilon}=\delta+(\sigma e^{-i\text{arg}
(\sigma^{\varepsilon})}+\bar{\sigma}e^{i\text{arg}(\sigma^{\varepsilon})})
\cos\frac{h\pi}{n+1}.$$
Moreover (\ref{lhkeps1}) follows from $\sigma e^{-i\text{arg}(\sigma^{\varepsilon})}+
\bar{\sigma}e^{i\text{arg}(\sigma^{\varepsilon})}=|\sigma|
\Re(e^{i(\text{arg}(\sigma)-\text{arg}(\sigma^{\varepsilon}))})$, where $\Re(\cdot)$ denotes the real part of the argument.
The proof is concluded by using Proposition \ref{sinherm}, observing that 
$\Re(e^{it})=\cos(t)$,  $\|x^{\varepsilon}_h\|_2^2=\frac{n+1}{2}$, and
$$
\min_{k\neq h}|\lambda_k-\widetilde{\lambda}_h^{\varepsilon}|=
|\lambda_h-\widetilde{\lambda}_h^{\varepsilon}|=2|\sigma|\left|1-
\cos(\text{arg}(\sigma)-\text{arg}(\sigma^{\varepsilon}))\right|
\cos \frac{h\pi}{n+1}.
$$\end{proof}

When $T$ is  skew-Hermitian and the perturbation $E$ has the same structure,
we have that $T^{\varepsilon}$ is  skew-Hermitian as well (i.e., 
$\tau^{\varepsilon}=-\bar{\sigma}^{\varepsilon}$). Thus, in both the Hermitian and  
skew-Hermitian cases, the structured $\varepsilon$-pseudospectrum lies in a closed line 
segment, i.e., on the real axis or on the imaginary axis, respectively. In other situations
when $|\sigma|=|\tau|$ and $|{\sigma}^{\varepsilon}|\neq|\tau^{\varepsilon}|$, the structured $\varepsilon$-pseudospectrum is 
bounded by the ellipse $\{\tau z+\delta +\sigma z^{-1}: z\in{\mathbb C}, |z|=1\}$, which 
is the boundary of the spectrum of the Toeplitz operator 
$T_\infty=(\infty;\sigma,\delta,\tau )$; see, e.g., \cite{BGN,NPR,NR2,RT}.

\subsubsection{The real case}
The following results are concerned with with normal real  tridiagonal Toeplitz matrices.
\begin{proposition} \label{eqeig}
All real symmetric  tridiagonal Toeplitz matrices of a given dimension have the same right and left eigenvectors.
\end{proposition}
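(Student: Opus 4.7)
The plan is to read off the result directly from the closed-form eigenvector formulas \eqref{xhk} and \eqref{yhk}, which have already been established at the start of the paper. The key observation is that a real symmetric tridiagonal Toeplitz matrix $T=(n;\sigma,\delta,\tau)$ must satisfy $\sigma=\tau\in\mathbb{R}$, so both ratios $\sigma/\tau$ and $\bar{\tau}/\bar{\sigma}$ equal $1$.

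First I would substitute $\sigma=\tau$ (real) into \eqref{xhk} and \eqref{yhk}. The scaling factors $(\sqrt{\sigma/\tau})^{k}$ and $(\sqrt{\bar{\tau}/\bar{\sigma}})^{k}$ collapse to $1$ for every $k$, leaving
\[
x_{h,k}=y_{h,k}=\sin\frac{hk\pi}{n+1},\qquad k=1:n,\quad h=1:n.
\]
Then I would note that the right-hand side depends only on $n$ (together with the indices $h$ and $k$), and in particular is independent of the specific values of $\sigma$ and $\delta$. Hence any two real symmetric tridiagonal Toeplitz matrices of the same size $n$ possess identical right eigenvectors $x_h$ and identical left eigenvectors $y_h$ for $h=1:n$.

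There is essentially no obstacle here: the statement is an immediate consequence of the explicit spectral formulas quoted in the introduction. The only minor point worth mentioning in the write-up is that the eigenvectors are determined by the formulas up to scaling, so the claim is understood in the standard sense of equality of the associated one-dimensional eigenspaces (which is unambiguous since the eigenvalues are simple whenever $\sigma=\tau\neq 0$, by Proposition \ref{prop1}).
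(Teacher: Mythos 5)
Your proof is correct and follows exactly the paper's argument: set $\sigma=\tau$ real in \eqref{xhk} and \eqref{yhk} so the scaling factors collapse to $1$, leaving $x_{h,k}=y_{h,k}=\sin\frac{hk\pi}{n+1}$, which depends only on $n$. The added remark on interpreting equality of eigenvectors up to scaling is a reasonable clarification but does not change the substance.
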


\begin{proof}
If $T$ is real and symmetric (i.e., $\sigma=\tau$), then 
\[
x_{h,k}=y_{h,k}=\sin \frac{hk\pi}{n+1},\quad k=1:n,\quad h=1:n.
\]
\end{proof}

\begin{corollary}\label{cors}
The eigenvectors of a real symmetric tridiagonal Toeplitz matrix are perfectly 
conditioned with respect to any structured perturbation that respects symmetry.
\end{corollary}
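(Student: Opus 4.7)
My plan is to reduce the corollary directly to Proposition \ref{eqeig} by observing that the class of real symmetric tridiagonal Toeplitz matrices is closed under structured perturbations respecting the symmetric tridiagonal Toeplitz structure.

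First, I would unpack the hypothesis: a structured perturbation respecting symmetry means a perturbation $\varepsilon E$ where $E$ is itself a real symmetric tridiagonal Toeplitz matrix with $\|E\|_F=1$. Writing $T=(n;\sigma,\delta,\sigma)$ with $\sigma,\delta\in\R$ and $E=(n;\sigma_E,\delta_E,\sigma_E)$, the perturbed matrix is
\[
T^{\varepsilon}=T+\varepsilon E=(n;\sigma+\varepsilon\sigma_E,\,\delta+\varepsilon\delta_E,\,\sigma+\varepsilon\sigma_E),
\]
which again lies in the class of real symmetric tridiagonal Toeplitz matrices of dimension $n$.

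Next, I would apply Proposition \ref{eqeig}, which states that all real symmetric tridiagonal Toeplitz matrices of a fixed dimension share the same right and left eigenvectors, namely $x_{h,k}=\sin\frac{hk\pi}{n+1}$. Hence for every index $h$ we have $\widetilde{x}_h^{\varepsilon}=\widetilde{x}_h$, so $\cos\theta_{\widetilde{x}_h,\widetilde{x}_h^{\varepsilon}}=1$ and $\sin\theta_{\widetilde{x}_h,\widetilde{x}_h^{\varepsilon}}=0$ for any admissible $\varepsilon\in\R$. In other words, the eigenvectors undergo no perturbation whatsoever, which is the strongest possible form of being perfectly conditioned.

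There is essentially no obstacle here; the corollary is an immediate structural consequence of Proposition \ref{eqeig}. The only subtlety worth flagging in the write-up is to make explicit what "structured perturbation that respects symmetry" means in this context, so as to justify that $T^{\varepsilon}$ remains in the real symmetric tridiagonal Toeplitz class and that the invariance of the eigenvector basis then applies.
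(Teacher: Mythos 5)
Your argument is exactly the paper's: note that a symmetry-respecting structured perturbation keeps $T^{\varepsilon}$ in the class of real symmetric tridiagonal Toeplitz matrices, then invoke Proposition \ref{eqeig} to conclude $\widetilde{x}_h^{\varepsilon}=\widetilde{x}_h$, so the eigenvectors are unchanged. Your write-up just spells out the parametrization of the perturbation more explicitly than the paper does; the proof is correct.
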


\begin{proof}
If $T$ is symmetric, then $T^{\varepsilon}$ is symmetric as well (i.e., 
$\sigma^{\varepsilon}=\tau^{\varepsilon}$). It follows from Proposition \ref{eqeig} that 
$\widetilde{x}_h=\widetilde{x}^{\varepsilon}_h$ for $h=1:n$. \end{proof}

\begin{corollary}\label{corss}
The eigenvectors of a real shifted skew-symmetric tridiagonal Toeplitz matrix are 
perfectly conditioned with respect to structured perturbations that respect both the 
skew-symmetry and the signs of the (sub- and) super-diagonals. 
\end{corollary}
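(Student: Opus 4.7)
The plan is to read off the exact form that $T$ and $T^{\varepsilon}$ must take under the hypotheses, and then appeal to the closed-form expression (\ref{xhk}) for the eigenvectors. First I would observe that a real shifted skew-symmetric tridiagonal Toeplitz matrix has the form $T=(n;\sigma,\delta,-\sigma)$ with $\sigma,\delta\in\mathbb{R}$, so the ratio $\sigma/\tau$ equals $-1$. A structured perturbation $\varepsilon E$ respecting the skew-symmetric structure has the same shape, $E=(n;\sigma_E,\delta_E,-\sigma_E)$ with real entries, which forces $T^{\varepsilon}=(n;\sigma+\varepsilon\sigma_E,\delta+\varepsilon\delta_E,-(\sigma+\varepsilon\sigma_E))$; in particular $\tau^{\varepsilon}=-\sigma^{\varepsilon}$.

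Next I would invoke the sign-preservation hypothesis. It ensures that $\sigma^{\varepsilon}$ has the same sign as $\sigma$ (and likewise $\tau^{\varepsilon}$ the same sign as $\tau$). Consequently the real ratio $\sigma^{\varepsilon}/\tau^{\varepsilon}$ equals $-1$, and the same branch of $\sqrt{\sigma/\tau}$ can be selected consistently for both $T$ and $T^{\varepsilon}$.

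The final step is a direct appeal to (\ref{xhk}): the right eigenvectors depend only on $n$, $h$, and $\sqrt{\sigma/\tau}$. Since neither the dimension nor this square root is altered by the perturbation, $\widetilde{x}^{\varepsilon}_h=\widetilde{x}_h$ for $h=1:n$, and the analogous identity for the left eigenvectors follows from (\ref{yhk}). This is precisely perfect conditioning of the eigenvectors, paralleling the argument used for Corollary \ref{cors}.

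I do not anticipate a deep obstacle, but the sign-preservation hypothesis is essential rather than cosmetic and deserves a line of commentary: without it, a perturbation could in principle flip the sign of $\sigma^{\varepsilon}$ and thereby toggle between the two branches $\pm i$ of $\sqrt{-1}$, replacing the eigenvectors by their complex conjugates. Making explicit that this is exactly the pitfall ruled out by the sign-preservation condition is the one point in the argument that warrants careful handling.
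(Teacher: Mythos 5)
Your argument is correct and takes essentially the same route as the paper: both identify $\sqrt{\sigma/\tau}=\mathrm{sgn}(\tau)\,i$ as the only quantity in (\ref{xhk}) that the perturbation could alter, and both use the sign-preservation hypothesis to fix that branch, so the eigenvectors of $T^{\varepsilon}$ coincide with those of $T$. The only cosmetic difference is that the paper confirms this by evaluating $\cos\theta_{\widetilde{x}_h,\widetilde{x}^{\varepsilon}_h}=1$ via formula (\ref{cosvec}), whereas you conclude directly from the coincidence of the eigenvector formulas.
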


\begin{proof}
If $T$ is shifted skew-symmetric, then  $\sigma=-\tau$, and one has 
\begin{equation*}
x_{h,k}=y_{h,k}=(\text{sgn}(\tau)i)^k\sin \frac{hk\pi}{n+1},\quad
k=1:n,\quad
h=1:n.
\end{equation*}
By assumption $T^{\varepsilon}$ is a real shifted skew-symmetric tridiagonal Toeplitz 
matrix and $\text{sgn}(\tau)=\text{sgn}(\tau^{\varepsilon})$. Thus, from (\ref{cosvec}), we
have 
$$\cos \theta_{\widetilde{x}_h,\widetilde{x}^{\varepsilon}_h}=
\frac{\left|\sum_{k=1}^n (\text{sgn}(\tau)i)^k(-\text{sgn}(\tau^{\varepsilon})i)^k\sin^2
\frac{hk\pi}{n+1}\right|}{\sqrt{\frac{n+1}{2} \sum_{k=1}^{n}\sin^{2}
\frac{hk\pi}{n+1}}}=\frac{\sum_{k=1}^n \sin^2 
\frac{hk\pi}{n+1}}{\sqrt{\frac{n+1}{2} \sum_{k=1}^{n}\sin^{2}\frac{hk\pi}{n+1}}}=1\,.
$$ \end{proof}

\begin{proposition}
The eigenvectors of a real normal tridiagonal Toeplitz matrix are perfectly conditioned 
with respect to any structured perturbation that respects the symmetry [skew-symmetry and 
signature]. 
\end{proposition}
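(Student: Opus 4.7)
The plan is to reduce this statement to a dichotomy and then invoke the two preceding corollaries. Concretely, I would first recall from \cite[Theorem 3.1]{NPR} (cited earlier in the excerpt) that a tridiagonal Toeplitz matrix $T=(n;\sigma,\delta,\tau)$ is normal if and only if $|\sigma|=|\tau|$. When $\sigma$ and $\tau$ are real, this equality forces $\sigma=\tau$ (the symmetric case) or $\sigma=-\tau$ (the shifted skew-symmetric case, since $\delta\in\mathbb R$ only shifts the spectrum along the real axis and does not affect the skew part $T-\delta I$).

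Having split into these two subcases, the rest of the argument is a direct appeal to what has already been proved. In the symmetric subcase $\sigma=\tau$, a structured perturbation that respects symmetry keeps $T^\varepsilon=(n;\sigma^\varepsilon,\delta^\varepsilon,\sigma^\varepsilon)$ symmetric, so Corollary \ref{cors} applies and yields $\widetilde x_h=\widetilde x_h^\varepsilon$ for all $h=1:n$. In the shifted skew-symmetric subcase $\sigma=-\tau$, a structured perturbation that respects skew-symmetry and the sign pattern of the off-diagonals keeps $T^\varepsilon=(n;\sigma^\varepsilon,\delta^\varepsilon,-\sigma^\varepsilon)$ shifted skew-symmetric with $\mathrm{sgn}(\tau^\varepsilon)=\mathrm{sgn}(\tau)$, so Corollary \ref{corss} applies and again produces $\cos\theta_{\widetilde x_h,\widetilde x_h^\varepsilon}=1$ for every $h$.

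Combining the two cases, the induced angle between $\widetilde x_h$ and $\widetilde x_h^\varepsilon$ vanishes in every admissible scenario, so the condition number of each eigenvector under the specified class of structured perturbations is equal to one, i.e.\ the eigenvectors are perfectly conditioned. The same conclusion holds for the left eigenvectors by an identical argument, since $y_{h,k}$ is obtained from $x_{h,k}$ by swapping $\sigma$ and $\tau$ up to conjugation, a symmetry that is preserved in both subcases.

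I do not foresee a real obstacle here: the substantive work — namely the closed-form expressions in \eqref{xhk}--\eqref{yhk}, the normality characterization, and the two explicit angle computations in Corollaries \ref{cors} and \ref{corss} — has all been carried out above. The only thing to be careful about is making the dichotomy precise, in particular noting that ``signature'' in the shifted skew-symmetric case refers exactly to $\mathrm{sgn}(\tau)=\mathrm{sgn}(\tau^\varepsilon)$, which is the hypothesis needed to apply Corollary \ref{corss}.
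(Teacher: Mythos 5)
Your proposal is correct and follows essentially the same route as the paper: establish the dichotomy that a real normal tridiagonal Toeplitz matrix is either symmetric or shifted skew-symmetric, then invoke Corollaries \ref{cors} and \ref{corss} respectively. The only cosmetic difference is that you derive the dichotomy from the Toeplitz-specific normality criterion $|\sigma|=|\tau|$ of \cite[Theorem 3.1]{NPR}, whereas the paper cites the general characterization of normal real tridiagonal matrices; both are valid.
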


\begin{proof}
A real tridiagonal matrix $T$ is normal if and only if it is symmetric or shifted 
skew-symmetric; see, e.g., \cite[Theorem 7.1] {NPR1} or \cite[Corollary 2.2]{NR}. The 
proof now follows from Corollaries \ref{cors} and \ref{corss}. 
\end{proof}

Let $\mathcal S_\mathcal T$ denote the subspace of real symmetric tridiagonal Toeplitz 
matrices and let $\mathcal A_\mathcal T$ be the subspace of real shifted skew-symmetric 
tridiagonal Toeplitz matrices. The above results show that the unstructured measure 
(\ref{condvec}) of the sensitivity to perturbations of the eigenvectors of a tridiagonal 
Toeplitz matrix in $\mathcal S_\mathcal T$ or $\mathcal A_\mathcal T$ is not accurate 
in case of structured perturbations $E$ of the matrix $T$, i.e., when 
$E \in \mathcal S_\mathcal T$ or $E \in \mathcal A_\mathcal T$ with $E$ small enough. 

\subsection{Eigenvalue structured sensitivity in the normal case}
 For normal matrices, the right and 
left unit eigenvectors can be chosen to be the same. Then the Wilkinson perturbation $W_h$ is symmetric for $h=1:n$.

\begin{corollary}
The ${\mathcal T}$-structured condition number of the eigenvalue $\lambda_h$ of a 
normal tridiagonal Toeplitz matrix $T$ is given by
\begin{equation}\label{condvecnorm}
\kappa_{\mathcal T}(\lambda_{h} )=\sqrt{\frac{1}{n}+\frac{2}{n-1} 
\cos^2 \frac{h\pi}{n+1}}, \quad h=1:n.
\end{equation} 
\end{corollary}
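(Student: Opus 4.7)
The plan is to derive this identity as an immediate consequence of Proposition \ref{propstr} by invoking the normality characterization. By \cite[Theorem 3.1]{NPR}, a tridiagonal Toeplitz matrix $T=(n;\sigma,\delta,\tau)$ is normal if and only if $|\sigma|=|\tau|$. Under this condition the ratio $|\sigma/\tau|$ equals $1$, and hence so does $|\tau/\sigma|$.

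With that observation, I would simply substitute $|\sigma/\tau|=|\tau/\sigma|=1$ into the general formula
\[
\kappa_{\mathcal T}(\lambda_{h})=\sqrt{\frac{1}{n}+\frac{1}{n-1}\left(\left|\frac{\sigma}{\tau}\right|+\left|\frac{\tau}{\sigma}\right|\right)\cos^2\frac{h\pi}{n+1}}
\]
established in Proposition \ref{propstr}. The bracketed sum collapses to $2$, yielding \eqref{condvecnorm} directly. No further trigonometric identities or computations are required, since the heavy lifting (evaluating $\|W_h|_{\mathcal T}\|_F$ and combining it with $\kappa(\lambda_h)$) has already been carried out in the proof of Proposition \ref{propstr}.

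Since the statement is a one-line specialization, there is essentially no obstacle; the only subtlety worth flagging is the logical dependence on \cite[Theorem 3.1]{NPR} to justify the passage from ``normal'' to ``$|\sigma|=|\tau|$''. I would therefore write the proof as a two-sentence argument: first cite the normality characterization, then substitute into \eqref{condnorm}.
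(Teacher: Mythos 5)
Your proposal is correct and matches the paper's own proof, which likewise reads off \eqref{condvecnorm} from \eqref{condnorm} using $|\sigma|=|\tau|$; your explicit citation of the normality characterization from \cite[Theorem 3.1]{NPR} just makes that step more transparent. Nothing further is needed.
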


\begin{proof}
The proof trivially follows from (\ref{condnorm}), since $|\sigma|=|\tau|$.
\end{proof}

\subsubsection{The real case}
We recall that a real tridiagonal matrix $T$ is normal if and only if it is symmetric or shifted 
skew-symmetric.  Notice that Proposition  \ref{lem:p1} can be generalized to several other structures and that, 
in particular, it holds true if one everywhere replaces $\mathcal T$ by  either $\mathcal S_\mathcal T$ or  
$\mathcal A_\mathcal T$, or other subspaces of matrices with a given symmetry-pattern; see \cite{NP2}.  It 
follows that, for $h=1:n$, the ${\mathcal S_\mathcal T}$-structured [${\mathcal A_\mathcal T}$-structured] 
condition number of the eigenvalue $\lambda_h$ of a real symmetric [shifted 
skew-symmetric] tridiagonal Toeplitz matrix $T$ is given by
\[
\kappa_{{\mathcal S_\mathcal T}}(\lambda_{h} )=\|W_h |_{{\mathcal S_\mathcal T}}\|_F \qquad [\kappa_{{\mathcal A_\mathcal T}}(\lambda_{h} )=\|W_h |_{{\mathcal A_\mathcal T}}\|_F],
\]
$\kappa(\lambda_{h})$ being equal to $1$, and  that the worst-case structured perturbation \cite{NP2} is given by the structured analogue of
the Wilkinson perturbation:  
\begin{equation*}
W_h |_{\widehat{\mathcal S_\mathcal T}}:=\frac{W_h |_{{\mathcal S_\mathcal T}}}{\|W_h |_{{\mathcal S_\mathcal T}}\|_F}
\qquad [W_h |_{\widehat{\mathcal A_\mathcal T}}:=\frac{W_h |_{{\mathcal A_\mathcal T}}}{\|W_h |_{{\mathcal A_\mathcal T}}\|_F}].
\end{equation*}

The following result  is concerned with symmetric tridiagonal Toeplitz matrices and 
eigenvalue sensitivity to $\mathcal S_\mathcal T$-structured perturbations, i.e., to real symmetric 
tridiagonal Toeplitz  matrix perturbations.

\begin{proposition}\label{reals}
The eigenvalues $\lambda_h$ of any symmetric tridiagonal Toeplitz matrix 
$T\in\mathbb{R}^{n \times n}$  have  condition numbers 
$$\kappa_{{\mathcal S_\mathcal T}}(\lambda_h)=\sqrt{\frac{1}{n}+\frac{2}{n-1} 
\cos^2 \frac{h\pi}{n+1}}, \quad h=1:n,$$ 
with respect to any structured perturbation that respects the symmetry. 
\end{proposition}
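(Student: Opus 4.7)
The plan is to reduce this to the already-proven Corollary in \eqref{condvecnorm} by exploiting the symmetry of the Wilkinson perturbation in the real symmetric case. Since $T$ is real symmetric, it is normal, so the generalization of Proposition \ref{lem:p1} to the subspace $\mathcal S_\mathcal T$ (noted in the text immediately preceding the proposition) gives $\kappa_{\mathcal S_\mathcal T}(\lambda_h) = \|W_h|_{\mathcal S_\mathcal T}\|_F$, with the traditional condition number $\kappa(\lambda_h)$ equal to $1$. Thus the entire task amounts to computing $\|W_h|_{\mathcal S_\mathcal T}\|_F$.

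The key observation I would use is that, by Proposition \ref{eqeig}, when $T$ is real symmetric tridiagonal Toeplitz the right and left normalized eigenvectors coincide and are real, so the Wilkinson perturbation
\[
W_h = \widetilde{y}_h \widetilde{x}_h^H = \widetilde{x}_h \widetilde{x}_h^T
\]
is itself real symmetric. I would then verify the following elementary fact about projections: the orthogonal projection (in the Frobenius inner product) of any real symmetric matrix $M$ onto the space $\mathcal T$ of tridiagonal Toeplitz matrices automatically lies in $\mathcal S_\mathcal T$. The reason is that the projection onto $\mathcal T$ simply averages the diagonal entries of $M$ (yielding the constant diagonal value) and separately averages its sub- and super-diagonal entries; for a symmetric $M$ these two averages coincide entry-for-entry, so the resulting Toeplitz matrix is symmetric. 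In other words, $W_h|_{\mathcal S_\mathcal T} = W_h|_{\mathcal T}$.

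Once this identity is in hand, I would simply invoke \eqref{condvecnorm} (the corollary giving the $\mathcal T$-structured condition number in the normal case), observing that its right-hand side already equals $\|W_h|_{\mathcal T}\|_F$ since $\kappa(\lambda_h)=1$. Chaining these:
\[
\kappa_{\mathcal S_\mathcal T}(\lambda_h) = \|W_h|_{\mathcal S_\mathcal T}\|_F = \|W_h|_{\mathcal T}\|_F
= \sqrt{\frac{1}{n}+\frac{2}{n-1}\cos^2 \frac{h\pi}{n+1}},
\]
which is the desired formula.

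I do not anticipate a genuine obstacle: the only substantive step is the claim that the Frobenius projection of a symmetric matrix onto $\mathcal T$ stays in $\mathcal S_\mathcal T$, and this is immediate from the explicit averaging formulas for the projection (the same formulas already used in the proof of Proposition \ref{propstr} to compute $\sigma_h$, $\delta_h$, $\tau_h$). The rest is bookkeeping, leveraging the facts that symmetric matrices are normal, that $\kappa(\lambda_h)=1$ in that case, and that the $\mathcal S_\mathcal T$-version of Proposition \ref{lem:p1} applies verbatim.
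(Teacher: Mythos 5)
Your proposal is correct and follows essentially the same route as the paper: both arguments observe that in the real symmetric case the Wilkinson perturbation $W_h=\widetilde{x}_h\widetilde{x}_h^T$ is real symmetric, deduce that its projection onto $\mathcal T$ already lies in $\mathcal S_\mathcal T$ so that $W_h|_{\mathcal S_\mathcal T}=W_h|_{\mathcal T}$, and then read off the formula from \eqref{condvecnorm} using $\kappa(\lambda_h)=1$. Your explicit justification of the projection step via the averaging formulas is a useful elaboration of what the paper leaves as "straightforward," but it is not a different method.
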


\begin{proof}
It is straightforward that $\kappa_{{\mathcal S_\mathcal T}}(\lambda_{h})\leq \kappa_{\mathcal T}(\lambda_{h} )$. 
In addition, in the real symmetric case, i.e., when $\sigma = \tau$, the Wilkinson perturbation 
associated with $\lambda_h$, $W_h=\widetilde{y}_h\widetilde{x}_h^H$, is real and 
symmetric. Thus, the orthogonal projection of $W_h$ in the subspace  of real symmetric 
tridiagonal Toeplitz matrices coincides with $W_h |_{{\mathcal T}}$. This concludes the proof, since
$\kappa_{{\mathcal S_\mathcal T}}(\lambda_{h})$ 
coincides with the condition number $\kappa_{{\mathcal T}}(\lambda_{h} )$ in 
(\ref{condvecnorm}), i.e.,  
\[
\kappa_{{\mathcal S_\mathcal T}}(\lambda_{h})=\|W_h |_{{\mathcal S_\mathcal T}}\|_F=\|W_h |_{{\mathcal T}}\|_F= \kappa_{{\mathcal T}}(\lambda_{h} ).
\]
\end{proof}

Figure \ref{fig3} shows the structured eigenvalue condition numbers 
$\kappa_{{\mathcal S_\mathcal T}}(\lambda_{h})$ for a $100\times 100$ symmetric tridiagonal 
Toeplitz matrix. 
 
\begin{figure}[tbp]
\centering
\includegraphics[scale=0.60]{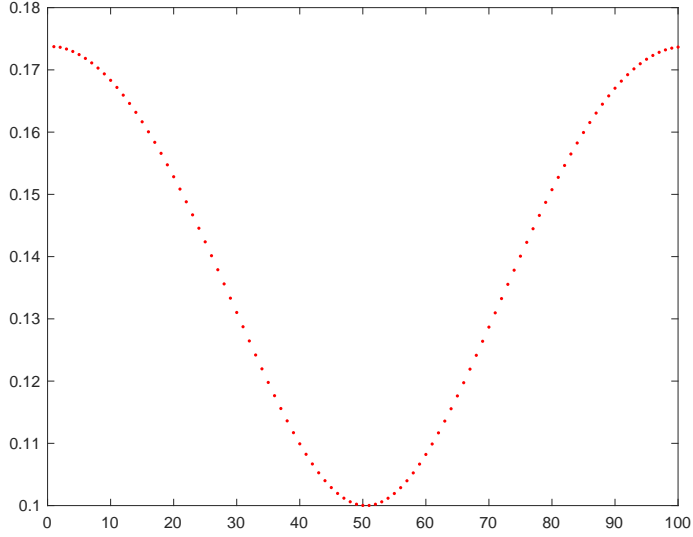}
\caption{Structured eigenvalue condition numbers for the matrix 
$T=(100;\sigma,\delta,\sigma)$, where  $\sigma$ and $\tau$ are arbitrarily chosen real parameters. 
The horizontal axis shows the index of the eigenvalue $\lambda_h$, $h=1:100$, and the vertical axis 
the structured condition numbers $\kappa_{{\mathcal S_\mathcal T}}(\lambda_{h})$. The condition 
numbers are independent of $\sigma$.}\label{fig3}
\end{figure}

\begin{remark}\label{rem_sym}
Let
$\sigma_h$, $\delta_h$, and $\tau_h$ denote the subdiagonal, diagonal, and superdiagonal 
entries, respectively,  of the orthogonal projection of the Wilkinson perturbation $W_h$  associated with the eigenvalue $\lambda_h$ of a real symmetric tridiagonal Toeplitz matrix  $T=(n; \sigma,\delta,\sigma)$ (i.e., $W_h |_{{\mathcal S_\mathcal T}}\equiv W_h |_{{\mathcal T}}$; cf. the proof of Proposition \ref{reals}). It is easy to show that
\begin{eqnarray*}
\sigma_h=\tau_h=\frac{1}{n-1} \cos \frac{h\pi}{n+1};\qquad \delta_h=\frac{1}{n}.
\end{eqnarray*}
Moreover, one has
\[
\widehat{\sigma}_h= \widehat{\tau}_h= \frac{\cos 
\frac{h\pi}{n+1}}{(n-1)\sqrt{\frac{1}{n}+\frac{2}{n-1} \cos^2 \frac{h\pi}{n+1}}};\quad
\widehat{\delta}_h= \frac{1}{n\sqrt{\frac{1}{n}+\frac{2}{n-1} \cos^2 
\frac{h\pi}{n+1}}},
\]
where $\widehat{\sigma}_h$, $\widehat{\tau}_h$, and $\widehat{\delta}_h$ denote 
the subdiagonal, diagonal, and superdiagonal entries, respectively, of the unit-norm
$\mathcal S_\mathcal T$-structured analogue of the Wilkinson perturbation, 
$W_h |_{\widehat{{\mathcal S_\mathcal T}}}$. Thus, if we perturb $T$ by the 
real symmetric tridiagonal Toeplitz matrix $\varepsilon W_{j} |_{\widehat{{\mathcal S_\mathcal T}}}$ [$-\varepsilon W_{j} |_{\widehat{{\mathcal S_\mathcal T}}}$], for a given ${j}\in \{1,\dots,n\}$,
the spectrum of the perturbed matrix $T_{j}^{\varepsilon}$ [$T_{j}^{-\varepsilon}$] contains the eigenvalue
\[
\lambda_{{j}}^{\varepsilon}=\delta+\frac{\varepsilon}
{n\sqrt{\frac{1}{n}+\frac{2}{n-1} \cos^2 \frac{{j}\pi}{n+1}}}+ 
2\left(\sigma+\frac{\varepsilon\cos \frac{{j}\pi }{n+1}}{(n-1)
\sqrt{\frac{1}{n}+\frac{2}{n-1} \cos^2 \frac{{j}\pi}{n+1}}}\right)\,\cos \frac{j\pi }{n+1}
\]
\[
[\lambda_{{j}}^{-\varepsilon}=\delta-\frac{\varepsilon}
{n\sqrt{\frac{1}{n}+\frac{2}{n-1} \cos^2 \frac{{j}\pi}{n+1}}}+ 
2\left(\sigma-\frac{\varepsilon\cos \frac{{j}\pi }{n+1}}{(n-1)
\sqrt{\frac{1}{n}+\frac{2}{n-1} \cos^2 \frac{{j}\pi}{n+1}}}\right)\,\cos \frac{j\pi }{n+1}].
\]

Straightforwardly, the ${\mathcal S_\mathcal T}$-structured $\varepsilon$-pseudospectrum, 
for $\varepsilon$ small enough, is given by the union of the real intervals 
$[\lambda_{h}^{-\varepsilon},\lambda_{h}^{+\varepsilon} ]$ of width 
$2\,\kappa_{{\mathcal S_\mathcal T}}(\lambda_h)\varepsilon$, for $h=1:n$.
\end{remark}

Let us turn to the shifted skew-symmetric case. 
\begin{proposition}
All the eigenvalues of a shifted skew-symmetric tridiagonal Toeplitz matrix 
$T\in\mathbb{R}^{n \times n}$ have the same condition number 
$$\kappa_{{\mathcal A_\mathcal T}}(\lambda_h)=\frac{1}{\sqrt n}$$ with respect to any 
structured perturbation that respects the shifted skew-symmetry. 
\end{proposition}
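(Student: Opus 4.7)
The plan is to use the identity $\kappa_{\mathcal{A}_\mathcal{T}}(\lambda_h)=\|W_h|_{\mathcal{A}_\mathcal{T}}\|_F$, valid in the normal case because $\kappa(\lambda_h)=1$, and to evaluate this Frobenius norm in closed form from the explicit expressions for the eigenvectors supplied by Corollary~\ref{corss}. The payoff will be that, for every $h$, the projection reduces to the single matrix $(1/n)I$, so its Frobenius norm is $1/\sqrt{n}$, independent of $h$.

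The first step is to invoke Corollary~\ref{corss} and write the unit right and left eigenvectors as $\tilde{x}_{h,k}=\tilde{y}_{h,k}=\sqrt{2/(n+1)}\,(\mathrm{sgn}(\tau)\,i)^k\sin\frac{hk\pi}{n+1}$. Forming the entries of $W_h=\tilde{y}_h\tilde{x}_h^H$, the structural fact to exploit is that the phase factor $(\mathrm{sgn}(\tau)i)^j(-\mathrm{sgn}(\tau)i)^k$ collapses to $1$ when $j=k$ and to $\pm\mathrm{sgn}(\tau)\,i$ when $j=k\pm 1$; hence the diagonal entries of $W_h$ are real while the first sub- and super-diagonal entries are purely imaginary. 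Next I would identify $\mathcal{A}_\mathcal{T}$ as the two-dimensional real subspace of $\mathbb{R}^{n\times n}$ orthogonally spanned by $I$ and by the matrix $S$ with $S_{k+1,k}=1$, $S_{k,k+1}=-1$ (and zero elsewhere), of squared Frobenius norms $n$ and $2(n-1)$, and compute the orthogonal projection of $W_h$ onto $\mathcal{A}_\mathcal{T}$ using the real Frobenius inner product $\mathrm{Re}\,\langle\cdot,\cdot\rangle_F$. By \eqref{trig1} one has $\mathrm{tr}(W_h)=1$, giving the coefficient $1/n$ on $I$. Using \eqref{id}, the trace $\mathrm{tr}(S^T W_h)=\sum_{k=1}^{n-1}[(W_h)_{k+1,k}-(W_h)_{k,k+1}]$ simplifies to $2\mathrm{sgn}(\tau)\,i\,\cos\frac{h\pi}{n+1}$, which is purely imaginary, so the coefficient of $S$ in the real projection vanishes.

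Consequently $W_h|_{\mathcal{A}_\mathcal{T}}=(1/n)I$, whence $\|W_h|_{\mathcal{A}_\mathcal{T}}\|_F=\sqrt{n}/n=1/\sqrt{n}$, independent of $h$, and the proposition follows. The only delicate point — and precisely the mechanism that forces every eigenvalue to share the same structured condition number — is that the $\pm i$ phase carried by the off-diagonal entries of $W_h$ (ultimately a consequence of $\sigma/\tau=-1$) is orthogonal to the real pattern of $S$ under the real Frobenius inner product, so the off-diagonal Toeplitz contribution cancels exactly; this is precisely the feature absent in the symmetric case treated in Proposition~\ref{reals}, where the analogous off-diagonals are real and produce the $h$-dependent term $\cos^2\frac{h\pi}{n+1}$.
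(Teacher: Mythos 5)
Your proof is correct and follows essentially the same route as the paper's: both compute the Wilkinson perturbation $W_h=\widetilde{y}_h\widetilde{x}_h^H$ from the explicit eigenvectors of Corollary~\ref{corss}, observe via \eqref{trig1} and \eqref{id} that the diagonal contribution is $1/n$ while the sub- and super-diagonal contributions are purely imaginary, and conclude that the real orthogonal projection onto $\mathcal A_\mathcal T$ is $\frac{1}{n}I$ with Frobenius norm $1/\sqrt{n}$. Your version merely makes explicit the real basis $\{I,S\}$ and the use of the real part of the Frobenius inner product, which the paper leaves implicit.
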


\begin{proof}
Odd eigenvector components of real shifted skew-symmetric tridiagonal Toeplitz matrices 
are purely imaginary numbers. Hence, the Wilkinson perturbation associated with $\lambda_h$ 
is symmetric. By using the same notation as in Remark \ref{rem_sym}, we obtain
\[
\sigma_h=\bar{\tau}_h=\frac{\text{sgn}(\tau) i}{n-1} 
\cos \frac{h\pi}{n+1};\qquad \delta_h=\frac{1}{n}.
\]
Thus, the orthogonal projection of $W_h$ in the subspace  of real shifted skew-symmetric 
tridiagonal Toeplitz matrices is the matrix $\frac{1}{n}I$. Its Frobenius norm 
$\frac{1}{\sqrt n}$ gives the structured condition number 
$\kappa_{{\mathcal A_\mathcal T}}(\lambda_{h} )=\|W_h |_{{\mathcal A_\mathcal T}}\|_F$. 
\end{proof}

\begin{remark}
Perturbing the real shifted skew-symmetric tridiagonal matrix 
$T=(n; \sigma,\delta,-\sigma)$ by $\pm\varepsilon W_h |_{\widehat{{\mathcal A_\mathcal T}}}$, 
where $W_h|_{\widehat{{\mathcal A_\mathcal T}}}$ is the $\mathcal A_\mathcal T$-structured 
unit-norm analogue of the Wilkinson perturbation, gives the pseudoeigenvalues 
$\lambda_{h}^{\pm\varepsilon} =\delta\pm \frac{\varepsilon}{\sqrt n} +2i|\sigma|\,
\cos \frac{h\pi }{n+1}$ for $h=1:n$. 
\end{remark}
 
We conclude this section by noticing that Proposition \ref{eqeig} can be extended to real
symmetric tridiagonal Toeplitz-type matrices. We have the following result.

\begin{proposition}\label{eqeigTt}
Any real symmetric tridiagonal Toeplitz-type matrix of a fixed order $n$ of the types 
considered in Table \ref{table1} has the same right and left eigenvectors.
\end{proposition}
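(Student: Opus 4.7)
The plan is to reduce the statement to the explicit eigenvector formulas for $T_{\alpha,\beta}$ recorded in Section~\ref{sec1}. First I would observe that for $T_{\alpha,\beta}$ to be real and symmetric one needs $\sigma=\tau\in\R$, $\delta\in\R$, and $\alpha,\beta\in\R$. Since every $\alpha$ and $\beta$ appearing in Table~\ref{table1} lies in $\{0,\pm\sqrt{\sigma\tau}\}$, and $\sqrt{\sigma\tau}=|\sigma|\in\R$ whenever $\sigma=\tau\in\R$, all eight parameter choices are compatible with real symmetry.

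Next, for each of the eight rows in Section~\ref{sec1} the right eigenvector entry admits the common factorization
\[
x_{h,k}=\left(\sqrt{\sigma/\tau}\,\right)^{k}g_{h}(k),
\]
where $g_{h}(k)$ is a sine or cosine term depending only on $h$, $k$, and $n$ (and on the row of Table~\ref{table1}). Under $\sigma=\tau$, the prefactor collapses to $1$, so $x_{h,k}=g_{h}(k)$ is independent of the common value of $\sigma$ and $\tau$. Hence every real symmetric matrix sitting in a fixed row of Table~\ref{table1} shares the same right eigenvectors.

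For the left eigenvectors I would invoke the rule recalled in Section~\ref{sec1}: the component $y_{h,k}$ is obtained from $x_{h,k}$ by replacing $(\sqrt{\sigma/\tau}\,)^{k}$ with $(\sqrt{\bar{\tau}/\bar{\sigma}}\,)^{k}$. When $\sigma=\tau$ is real this replacement factor also equals $1$, so $y_{h,k}=g_{h}(k)=x_{h,k}$. Together with the previous paragraph this yields the proposition for each of the eight types and simultaneously shows that left and right eigenvectors coincide, in direct analogy with Proposition~\ref{eqeig}.

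The main obstacle, if any, is only the bookkeeping of the eight cases. However they all share the common factorization $x_{h,k}=(\sqrt{\sigma/\tau}\,)^{k}g_{h}(k)$, so a single argument handles them uniformly: the trigonometric part $g_{h}(k)$ is what distinguishes one type from another, while the $\sigma/\tau$-dependent prefactor is exactly the object that disappears under real symmetry, both for right and for left eigenvectors.
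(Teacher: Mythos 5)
Your proposal is correct and follows essentially the same route as the paper: the paper's proof simply lists the eight eigenvector formulas with the prefactor $(\sqrt{\sigma/\tau})^{k}$ set to $1$ under $\sigma=\tau$, exactly the reduction you describe. Your additional remark that the left eigenvectors coincide because the replacement factor $(\sqrt{\bar{\tau}/\bar{\sigma}})^{k}$ also equals $1$ makes explicit a step the paper leaves implicit, but it is the same argument.
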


\begin{proof}
One has 
\[
\begin{array}{rclll}
x_{h,k}&=&\sin\frac{2hk\pi}{2n+1},&\quad 
\alpha=0,&\quad\beta=\sigma;\\
x_{h,k}&=&\sin\frac{h(2k-1)\pi}{2n+1},&\quad
\alpha=\sigma,&\quad\beta=0;\\
x_{h,k}&=&\sin\frac{(2h-1)k\pi}{2n+1},&\quad
\alpha=0,&\quad\beta=-\sigma;\\
x_{h,k}&=&\cos\frac{(2h-1)(2k-1)\pi}{2(2n+1)},&
\quad \alpha=-\sigma,&\quad\beta=0;\\
x_{h,k}&=&\sin\frac{(2h-1)(2k-1)\pi}{4n},&
\quad \alpha=\sigma,&\quad\beta=-\sigma;\\
x_{h,k}&=&\cos\frac{(2h-1)(2k-1)\pi}{4n},&\quad
\alpha=-\sigma,&\quad\beta=\sigma;\\
x_{h,k}&=&\sin\frac{h(2k-1)\pi}{2n},&\quad 
\alpha=\sigma,&\quad\beta=\sigma;\\
x_{h,k}&=&\cos\frac{(h-1)(2k-1)\pi}{2n},&\quad
\alpha=-\sigma,&\quad\beta=-\sigma
\end{array}
\]
for $k=1:n$.
\end{proof}

\section{Applications}\label{sec5}
This section discusses how the theory developed in the previous sections can be applied
to approximate the eigenvalues or accurately evaluate the spectral factorization of
certain matrices.

\subsection{Approximation of the spectrum of a real symmetric tridiagonal matrix}
Let $A_n\in\mathbb{R}^{n \times n}$ be a symmetric tridiagonal matrix. Denote the $j$th
subdiagonal entry of $A_n$ by $\sigma_j$, $j=1:n-1$, and let $\delta_j$ be the $j$th 
diagonal entry, $j=1:n$. The matrix $A_n$ may, for instance, have been determined by 
carrying out $n$ steps of the symmetric Lanczos algorithm applied to a large symmetric 
matrix $A$; see, e.g., \cite{GVL} for a discussion on this algorithm. 

Let  $T:=A_n|_{\mathcal{T}}$ be the orthogonal projection of $A_n$ in the subspace 
${\mathcal T}$ of tridiagonal Toeplitz matrices. We are interested in the matrix $T$ 
because its eigenvalues are known in closed form and can be used to estimate the 
eigenvalues of $A_n$.

\begin{proposition}
$T$ is a real symmetric tridiagonal Toeplitz matrix. 
\end{proposition}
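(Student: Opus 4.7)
The plan is to compute the orthogonal projection of $A_n$ onto $\mathcal{T}$ explicitly and read off the symmetry and realness from the formula. The subspace $\mathcal{T}$ of tridiagonal Toeplitz matrices is three-dimensional over $\mathbb{C}$, spanned (in the Frobenius inner product) by the three mutually orthogonal basis matrices $S$, $I$, $S^T$, where $S$ has ones on the first subdiagonal and zeros elsewhere. Their squared Frobenius norms are $n-1$, $n$, and $n-1$, respectively.

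First I would write the orthogonal projection of a general $n\times n$ matrix $M$ on $\mathcal{T}$ as
\[
M|_{\mathcal T}=\frac{\langle M,S\rangle_F}{n-1}S+\frac{\langle M,I\rangle_F}{n}I+\frac{\langle M,S^T\rangle_F}{n-1}S^T,
\]
so that the three Toeplitz parameters are the (equally weighted) averages of the subdiagonal, diagonal, and superdiagonal entries of $M$, respectively. Applied to $M=A_n$, and using the notation of the excerpt, this gives
\[
\sigma=\frac{1}{n-1}\sum_{j=1}^{n-1}\sigma_{j},\qquad
\delta=\frac{1}{n}\sum_{j=1}^{n}\delta_{j},\qquad
\tau=\frac{1}{n-1}\sum_{j=1}^{n-1}(A_n)_{j,j+1}.
\]

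Next I would invoke the hypotheses on $A_n$: being tridiagonal ensures that only these three diagonals contribute to any inner product with an element of $\mathcal{T}$, so no off-tridiagonal contributions need to be tracked. Being symmetric gives $(A_n)_{j,j+1}=\sigma_j$ for all $j$, so the formulas above yield $\tau=\sigma$. Being real makes $\sigma$ and $\delta$ real averages of real numbers.

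Finally I would conclude that $T=A_n|_{\mathcal T}=(n;\sigma,\delta,\sigma)$ with $\sigma,\delta\in\mathbb{R}$, which is precisely the defining form of a real symmetric tridiagonal Toeplitz matrix. The argument is essentially bookkeeping, and I do not anticipate any real obstacle; the only mild subtlety is to notice that the orthogonal projection does not require solving a linear system because the three natural basis matrices of $\mathcal{T}$ are already orthogonal in the Frobenius inner product.
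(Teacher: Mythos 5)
Your proof is correct and follows essentially the same route as the paper: the paper's one-line argument is precisely that the subdiagonal and superdiagonal entries of $T$ are both equal to $\frac{1}{n-1}\sum_{j=1}^{n-1}\sigma_j$, which is the fact you derive by writing out the Frobenius projection onto the orthogonal basis $S$, $I$, $S^T$. You simply make explicit the bookkeeping the paper leaves implicit.
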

\begin{proof}
The proof is straightforward, because both the subdiagonal and superdiagonal entries of $T$
are equal to $\frac{\sum_{j=1}^{n-1}\sigma_j}{n-1}$.
\end{proof}

\begin{proposition}\label{spT}
If the trace of $A_n$ vanishes, then the spectrum of $T$ is real and symmetric with 
respect to the origin. Moreover, if $n$ is odd, then $T$ is singular. For $n$ even, 
\[
\kappa_2(T)=\frac{\cos \frac{\pi}{n+1}}{\cos \frac{n\pi}{2(n+1)}}.
\]
\end{proposition}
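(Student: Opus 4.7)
The plan is to compute $T$ explicitly, invoke the closed-form eigenvalues (\ref{lamhT}), and read off the three claimed consequences.

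First I would note that by definition of the orthogonal projection onto ${\mathcal T}$, the matrix $T$ has diagonal entry $\delta = \frac{1}{n}\sum_{j=1}^n \delta_j = \frac{1}{n}\,\mathrm{trace}(A_n)$ and common off-diagonal entry $\sigma = \tau = \frac{1}{n-1}\sum_{j=1}^{n-1}\sigma_j$. The hypothesis $\mathrm{trace}(A_n)=0$ therefore gives $\delta=0$, so by (\ref{lamhT})
\[
\lambda_h = 2\sigma\cos\frac{h\pi}{n+1},\qquad h=1:n.
\]
Since $\sigma\in\R$, the spectrum is real. Symmetry about the origin follows from the identity $\cos\frac{(n+1-h)\pi}{n+1} = -\cos\frac{h\pi}{n+1}$, which pairs $\lambda_h$ with $-\lambda_{n+1-h}$.

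If $n$ is odd, then $h=(n+1)/2\in\{1,\ldots,n\}$, and $\lambda_{(n+1)/2} = 2\sigma\cos(\pi/2)=0$, so $T$ is singular.

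For $n$ even, the key step is to identify the extrema of $|\cos\tfrac{h\pi}{n+1}|$ over $h\in\{1,\ldots,n\}$. The maximum is attained at $h=1$ (and $h=n$), giving $\max_h|\lambda_h|=2|\sigma|\cos\frac{\pi}{n+1}$. The minimum occurs at the $h$ closest to $(n+1)/2$, namely $h=n/2$ (and $h=n/2+1$), giving $\min_h|\lambda_h| = 2|\sigma|\,\bigl|\cos\frac{n\pi}{2(n+1)}\bigr|$. Since $T$ is real symmetric, $\kappa_2(T)=\max_h|\lambda_h|/\min_h|\lambda_h|$, and the factor $2|\sigma|$ cancels to yield the stated formula. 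No real obstacle arises; the only thing to check carefully is the location of the minimum modulus eigenvalue (the identity $\cos\frac{n\pi}{2(n+1)}=\sin\frac{\pi}{2(n+1)}$ confirms that this quantity is positive, so the ratio is well defined).
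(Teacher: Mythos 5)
Your proposal is correct and follows essentially the same route as the paper: set $\delta=\mathrm{trace}(A_n)/n=0$, invoke the closed-form eigenvalues \eqref{lamhT} with $\tau=\sigma$, and identify $\kappa_2(T)$ as the ratio $|\lambda_1|/|\lambda_{n/2}|$. Your version merely spells out the symmetry pairing $h\leftrightarrow n+1-h$ and the location of the extremal moduli, which the paper leaves implicit.
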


\begin{proof}
The diagonal entries of $T$, given by $\delta=\frac{\sum_{j=1}^{n}\delta_j}{n}$, vanish. 
Therefore the spectrum $\{\lambda_j\}_{j=1}^n$ of $T$ is symmetric with respect to the 
origin. If $n$ is odd, zero is an eigenvalue; otherwise, if $n$ is even, one has 
$\kappa_2(T)=\lambda_1/\lambda_{\frac{n}{2}}$, where the eigenvalues are defined by
\eqref{lamhT} with $\tau=\sigma$. This concludes the proof.
\end{proof}


We have that $T$ coincides with $A_n$ if and only if $A_n$ is a Toeplitz matrix. Thus,
trivially, if $A_n$ is a scalar, then $T$ coincides with $A_n$. Moreover, the following 
inequality holds.

\begin{proposition}\label{prpsum}
Let $\lambda_1(A_n)\geq\ldots\geq\lambda_n(A_n)$ denote the eigenvalues of $A_n$ in 
decreasing order and let $\lambda_i$ be the eigenvalues of $T$ given by \eqref{lamhT} with
$\tau=\sigma$. Then the average of the squared distances between the eigenvalues of $A_n$ 
and $T$ satisfies
\[
\frac{1}{n}\sum_{i=1}^{n}(\lambda_i(A_n)-\lambda_i)^2\leq
\frac{1}{n}\|A_n-T\|_F^2.
\]
\end{proposition}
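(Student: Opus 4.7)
The inequality is a classical consequence of the Hoffman--Wielandt theorem, so my plan is to invoke that result directly. Both $A_n$ and $T$ are real symmetric by construction (the Toeplitz projection of a real symmetric tridiagonal matrix is real symmetric, as observed just above), hence both are normal with real spectra.

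First I would recall the Hoffman--Wielandt inequality in its Hermitian form: if $B,C\in\mathbb{R}^{n\times n}$ are symmetric with eigenvalues $\mu_1(B)\geq\cdots\geq\mu_n(B)$ and $\mu_1(C)\geq\cdots\geq\mu_n(C)$, then
\[
\sum_{i=1}^n\bigl(\mu_i(B)-\mu_i(C)\bigr)^2 \leq \|B-C\|_F^2,
\]
i.e., in the Hermitian case the sorted pairing of eigenvalues achieves the minimum of the Hoffman--Wielandt bound over all permutations. I would then apply this to $B=A_n$ and $C=T$, using the decreasing ordering of the eigenvalues $\lambda_i$ of $T$ (since $\cos\frac{i\pi}{n+1}$ is decreasing in $i$, the formula \eqref{lamhT} with $\tau=\sigma$ already produces them in decreasing order when $\sigma>0$; otherwise relabel accordingly, which does not affect the sum of squared differences after sorting).

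Dividing the Hoffman--Wielandt inequality by $n$ yields exactly the claimed bound
\[
\frac{1}{n}\sum_{i=1}^{n}(\lambda_i(A_n)-\lambda_i)^2\leq \frac{1}{n}\|A_n-T\|_F^2.
\]
I would cite a standard reference (e.g., Horn--Johnson or Bhatia) for the Hermitian Hoffman--Wielandt statement with sorted pairing.

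There is essentially no obstacle: the entire content reduces to noting that both matrices are symmetric and quoting a known theorem. The only minor care needed is bookkeeping of the ordering of the eigenvalues of $T$ so that one is indeed pairing sorted spectra, but this is routine since the closed-form expression \eqref{lamhT} makes the ordering explicit up to the sign of $\sigma$.
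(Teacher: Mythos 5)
Your proposal is correct and follows essentially the same route as the paper: the paper also reduces the claim to the symmetric-matrix inequality $\|\lambda_{\downarrow}(A)-\lambda_{\downarrow}(B)\|\leq\|A-B\|_F$ (citing Bhatia, 1986), which is precisely the Hermitian Hoffman--Wielandt statement with sorted pairing that you invoke, and then divides by $n$. Your extra remark about the ordering of the $\lambda_i$ from \eqref{lamhT} is a reasonable bit of bookkeeping but does not change the argument.
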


\begin{proof}
Let $A,B\in{\R}^{n\times n}$ be symmetric matrices. Denote by 
$\lambda_{\downarrow}(M)$ [$\lambda_{\uparrow}(M)$] the vector whose entries are the 
eigenvalues of a symmetric matrix $M$ sorted in decreasing [increasing] order. Then 
\[
\|\lambda_{\downarrow}(A)-\lambda_{\downarrow}(B)\|\leq\|A-B\|_F\leq 
\|\lambda_{\downarrow}(A)-\lambda_{\uparrow}(B)\|;
\]
see, e.g., \cite{RB86}. This shows the proposition.
\end{proof}

\begin{remark}
Notice that $A_n$ being symmetric positive definite does not guarantee that $T$ is 
positive definite. Indeed, $T$ is positive definite if and only if 
\[
\frac{\sum_{j=1}^{n}\delta_j}{n}>
2\frac{\sum_{j=1}^{n-1}\sigma_j}{n-1} \cos \frac{\pi}{n+1}.
\]
\end{remark}

Let $T=(n;\sigma,\delta,\tau)$ be symmetric and define the Toeplitz-type matrix
$A_n:=T_{\alpha,\beta}$, where $\alpha=\pm\sqrt{\sigma\tau}$ and 
$\beta=\mp\sqrt{\sigma\tau}$. The eigenvalues of the matrix $A_n$ are symmetric with 
respect to $\delta$; expressions for the eigenvalues are provided in the fifth and sixth 
rows of Table \ref{table1}. It is easy to show that $T$ is the closest tridiagonal 
Toeplitz matrix to $A_n$ in the Frobenius norm. Moreover, if $\delta=0$, then the 
eigenvalues of $A_n$ are symmetric with respect to the origin and $A_n$ has null trace so
that, due to Proposition \ref{spT}, the spectrum of $T$ is symmetric with respect to the 
origin. 

We illustrate Proposition \ref{prpsum} with an example. Let the matrix 
$A_n=[a_{i,j}]\in\R^{n\times n}$ differ from the symmetric Toeplitz matrix 
$T=(n,\sigma,\delta,\sigma)$ only in the entry $a_{2,2}$. Then the proposition shows that
\[
\frac{1}{n}\sum_{i=1}^{n}(\lambda_i(A_n)-\lambda_i)^2\leq
\frac{1}{n}|a_{2,2}-\delta|^2. 
\]
In particular, the sum in the left-hand side converges to zero as $n$ increases. Hence,
the spectrum of $T$ furnishes an accurate approximation of the spectrum of $A_n$ when
$n$ is large.

\subsection{Accurate computation of the spectrum of nonsymmetric nearly tridiagonal Toeplitz 
matrices}
Let the tridiagonal Toeplitz matrix $T=(n;\delta,\sigma,\tau)$ be nonsymmetric. It has 
the spectral factorization 
\begin{equation}\label{specfact}
T=X\Lambda X^{-1},
\end{equation}
where $X\in{\mathbb C}^{n\times n}$ is the eigenvector matrix whose columns are given by
\eqref{xhk} and the entries of the matrix $\Lambda={\rm diag}[\lambda_1,\lambda_2,\ldots,\lambda_n]$
are the eigenvalues given by \eqref{lamhT}. 

When $T\in{\mathbb R}^{n\times n}$ is far from symmetric, then the MATLAB function {\sf eig} is only able to compute 
the spectral factorization \eqref{specfact} with reduced accuracy. For instance, consider 
the matrix $T=(25;1,0,0.01)$. The eigenvalues of $T$ are given by
\begin{equation}\label{exspect1}
\lambda_h=0.2\cos\frac{h\pi}{26},\quad h=1:25,
\end{equation}
while many of the eigenvalues determined by the function {\sf eig} have a significant
imaginary part; see Figure \ref{fig1}. 

\begin{figure}
\centerline{
\includegraphics[scale=0.55,trim= 0mm 0.01mm 0mm 0mm]{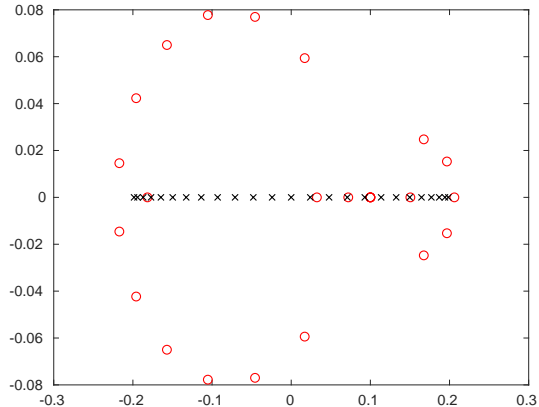}
}
\caption{Exact eigenvalues \eqref{exspect1} of $T=(25;1,0,0.01)$ (marked with black {\rm x}) 
and the approximate eigenvalues computed by the function {\sf eig} (marked with red 
\textcolor{red}{\rm o}).}\label{fig1}
\end{figure}

Also the spectrum of other nonsymmetric matrices can be difficult to compute accurately by
the function {\sf eig}. When the matrix of interest, $A_n\in{\mathbb R}^{n\times n}$, is 
close to a Toeplitz matrix $T=(n;\sigma,\delta,\tau)$, the spectral factorization 
\eqref{specfact} may be used to determine a more accurate spectral factorization of $A_n$ 
than can be computed with {\sf eig} in the following manner:
\begin{enumerate}
\item Determine the tridiagonal Toeplitz matrix $T$ closest to $A_n$ in the Frobenius 
norm.
\item 
Determine the spectral factorization \eqref{specfact} of $T$ by using (\ref{lamhT}) and 
(\ref{xhk}).
\item Evaluate the matrix $B=X^{-1}A_nX=\Lambda+X^{-1}(A_n-T)X$. If $T$ is close to
$A_n$, then this matrix is closer to a symmetric matrix than $A_n$.
\item Compute the spectral factorization $B=YDY^{-1}$ by using the MATLAB function 
{\sf eig}. Thus, $Y$ is the eigenvector matrix of $B$, and $D$ is a diagonal matrix, whose
nontrivial entries are the eigenvalues. Typically, the matrix $Y$ is fairly well 
conditioned and can be computed by the function {\sf eig} with quite high accuracy.
The matrix $Z=XY$ is (an approximation of) the eigenvector matrix of $A_n$. 
\end{enumerate}

We illustrate the computations outlined with an example. Let $T=(25;1,0,0.01)$, and
let $A_n=T_{\alpha,\beta}\in\mathbb{R}^{n \times n}$ be a tridiagonal Toeplitz-type matrix
\eqref{Talbeta} obtained from $T$ with $\alpha=0.1$ and $\beta=-\alpha$. The eigenvalues of $A_n$ are 
real and symmetric with respect to the origin; their formulas are shown in the fifth
and sixth rows of Table \ref{table1}. The eigenvectors of $A_n$ are described in Section 
\ref{sec1}. Hence, it is straightforward to assess the accuracy of the computational 
method described. It is easy to see that $T$ is the closest tridiagonal Toeplitz matrix to
$A_n$. Its eigenvalues and eigenvectors are given by (\ref{lamhT}) and (\ref{xhk}).

Figure \ref{fig4} displays the spectrum of $A_n$ computed by using the relevant 
formulas of Table \ref{table1} (marked with black {\rm +}), and approximations of the spectrum 
computed by the MATLAB function {\sf eig} (marked with red \textcolor{red}{\rm o}) and the 
procedure described above (marked with blue \textcolor{blue}{\rm x}). The eigenvalues determined in the
latter manner cannot be distinguished from the exact ones in Figure \ref{fig4}, while 
some of the approximate eigenvalues computed by {\sf eig} applied to $A_n$ can be seen to 
have large imaginary components. The maximum pairwise difference of the exact eigenvalues 
and the eigenvalues computed by the MATLAB function {\sf eig}, ordered in the same manner, 
is $4.3\cdot 10^{-1}$, while the maximum pairwise difference of the exact eigenvalues and 
the eigenvalues computed by our approach described above only is $3.3\cdot 10^{-8}$. Thus, 
the approximation of a tridiagonal matrix by the closest Toeplitz matrix and using the 
spectral factorization of the latter may yield a more accurate spectral factorization than 
the one determined by the MATLAB function {\sf eig}.

\begin{figure}
\centerline{
\includegraphics[scale=0.55,trim= 0mm 0.01mm 0mm 0mm]{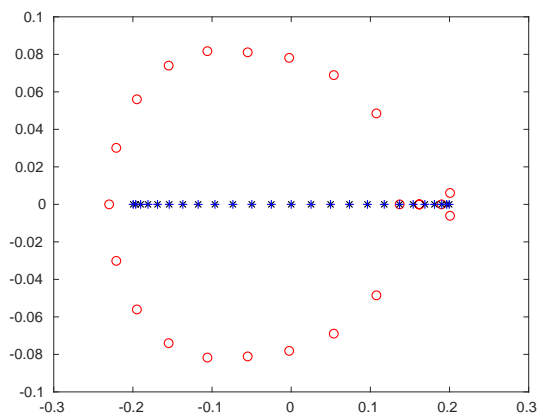}
}
\caption{Exact eigenvalues of $A_n$ (marked with black {\rm x}), the approximate eigenvalues computed by the 
function {\sf eig} (marked with red \textcolor{red}{\rm o}), and eigenvalues computed by the algorithm outlined above 
(marked with blue \textcolor{blue}{\rm +}).}\label{fig4}
\end{figure}

\section{Conclusions}\label{sec6}
The paper discusses the sensitivity of eigenvectors of tridiagonal Toeplitz matrices
under general and structured perturbations. The eigenvectors are found to be quite
sensitive to perturbations when the Toeplitz matrix is far from normal, but the
eigenvectors are insensitive to structured perturbation when the Toeplitz matrix has
additional structure, such as being real symmetric. Our analysis suggests a novel method
for computing the spectral factorization of a general nonsymmetric tridiagonal matrix.


\newpage

\end{document}